\newtheorem{thm}{Theorem}[section]
\newtheorem{lem}[thm]{Lemma}
\newtheorem{prop}[thm]{Proposition}
\theoremstyle{definition}
\newtheorem{df}[thm]{Definition}
\theoremstyle{remark}
\newtheorem{rem}[thm]{Remark}
\numberwithin{equation}{section}
\title{On Order Ideals of Minuscule Posets III: The CDE Property}
\author{David B Rush}
\address{Department of Mathematics, Massachusetts Institute of Technology, 77 Massachusetts Ave, Cambridge, MA 02139}
\email{dbr@mit.edu}
\date{\today}
\begin{document}

\begin{abstract}
Recent work of Hopkins establishes that the lattice of order ideals of a minuscule poset satisfies the coincidental down-degree expectations property of Reiner, Tenner, and Yong.  His approach appeals to the classification of minuscule posets.  A uniform proof is presented herein.  

The blueprint follows that of Rush and Wang in their uniform proof that various cardinality statistics are homomesic on orbits of order ideals of minuscule posets under the Fon-Der-Flaass action.  The underpinning remains the original insight of Rush and Shi into the structure of the isomorphism between the weight lattice of a minuscule representation of a complex simple Lie algebra and the lattice of order ideals of the corresponding minuscule poset.  
\end{abstract}

\maketitle

\section{Introduction}

Combinatorial phenomena first observed in products of two chains are often subsequently seen to be present throughout a broader class of partially ordered sets arising from the representation theory of Lie algebras.  Since 2011, the author, in collaboration with his co-authors, has endeavored to explain the ubiquity of these \textit{minuscule posets} by providing uniform proofs of several properties that hold for all minuscule posets.  

In the first of this series \cite{Rush}, the author, together with Shi, proved uniformly that the Fon-Der-Flaass action on order ideals of a minuscule poset obeys the \textit{cyclic sieving phenomenon} of Reiner, Stanton, and White \cite{Reiner}.  This generalized prior work of Fon-Der-Flaass \cite{FDF}, Stanley \cite{StanleyP}, and Striker--Wiliams \cite{Striker}, who considered minuscule posets arising from Lie algebras of types A and B.  

In the sequel \cite{Rush2}, the author, together with Wang, revisited the Fon-Der-Flaass action on order ideals of a minuscule poset and proved uniformly that the order ideal cardinality and antichain cardinality statistics exhibit \textit{homomesy}, as defined by Propp and Roby \cite{Propp}.  This generalized prior work of Propp--Roby \cite{Propp}, who considered minuscule posets arising from Lie algebras of type A, which are products of two chains.  

This continuation adapts the techniques of \cite{Rush} and \cite{Rush2} to prove uniformly that the lattice of order ideals of a minuscule poset satisfies the \textit{coincidental down-degree expectations (CDE)} property, recently introduced by Reiner, Tenner, and Yong \cite{Reiner2}.  The special case of a product of two chains served as the primary combinatorial result of Chan, Mart\'in, Pflueger, and i Bigas \cite{Chan} (who obtained it en route to an application in enumerative geometry).  That result --- along with its generalization by Chan, Haddadan, Hopkins, and Moci \cite{Chan2} --- motivated the Reiner--Tenner--Yong \cite{Reiner2} definition.  

Reiner, Tenner, and Yong \cite{Reiner2} also introduced a stronger incarnation of the CDE property, which they called \textit{multichain-CDE (mCDE)}.  They demonstrated case-by-case that CDE extends to the lattice of order ideals of all minuscule posets, and they conjectured the same for mCDE.  Hopkins \cite{Hopkins} resolved their conjecture with the introduction of the \textit{toggle-CDE (tCDE)} property, stronger yet than mCDE, and a case-by-case proof for tCDE.  We offer a uniform proof of Hopkins's result that connects the tCDE property directly to the representation theory underlying the minuscule posets.  This constitutes a third consummation of the approach inaugurated in Rush--Shi \cite{Rush} and augmented in Rush--Wang \cite{Rush2}, further testifying to our framework's ability to unearth algebraic explanations for (ostensible) combinatorial coincidences.  

\subsection{Toggling and the CDE properties}

Let $Q$ be a poset.  The \textit{down-degree} of an element $p \in Q$ is the number of elements in $Q$ covered by $p$.  The CDE properties concern the expected value of the down-degree function $\operatorname{ddeg} \colon Q \rightarrow \mathbb{R}$ with respect to various probability distributions on $Q$.  

Following Hopkins \cite{Hopkins}, we let $\operatorname{uni}$ denote the uniform distribution on $Q$.  Let $\mathcal{C}_{\max}$ denote the set of maximal chains of $Q$.  Let $\operatorname{maxchain}$ denote the distribution given by \[\mathbb{P}(p) := \frac{| \lbrace c \in \mathcal{C}_{\max} : p \in c \rbrace |}{| \lbrace (c, q) \in \mathcal{C}_{\max} \times Q : q \in c \rbrace |}.\]

\begin{df}
A poset $Q$ satisfies the \textit{CDE} property if $\mathbb{E}(\operatorname{uni}; \operatorname{ddeg}) = \mathbb{E}(\operatorname{maxchain}; \operatorname{ddeg})$.  
\end{df}

For nonnegative integers $k$, let $\mathcal{C}_k$ denote the set of $k$-chains of $Q$, and let $\operatorname{chain}(k)$ denote the distribution given by \[\mathbb{P}(p) := \frac{| \lbrace c \in \mathcal{C}_k : p \in c \rbrace |}{| \lbrace (c, q) \in \mathcal{C}_k \times Q : q \in c \rbrace |}.\]

If $Q$ is a ranked poset with maximal rank $r$, then these distributions interpolate between the distributions defined above, for $\operatorname{uni} = \operatorname{chain}(0)$ and $\operatorname{maxchain} = \operatorname{chain}(r)$.  This observation leads us to the stronger notion of mCDE.  

\begin{df}
A ranked poset $Q$ with maximal rank $r$ satisfies the \textit{mCDE} property if $\mathbb{E}(\operatorname{uni}; \operatorname{ddeg}) = \mathbb{E}(\operatorname{chain(k)}; \operatorname{ddeg})$ for all $1 \leq k \leq r$.  
\end{df}

Suppose $Q$ is a distributive lattice, and let $P$ be the poset of join-irreducible elements of $Q$.  Then $Q$ coincides with $J(P)$ --- the lattice of order ideals of $P$, partially ordered by inclusion.  In this case, Chan, Haddadan, Hopkins, and Modi \cite{Chan2} showed that the distributions $\operatorname{chain(k)}$ on $Q$ exhibit \textit{toggle symmetry}, which they defined to mean that each element in $P$ is equally likely to be eligible to be toggled in as eligible to be toggled out.  

Toggles, so named by Striker and Williams \cite{Striker}, are local actions on order ideals of posets.  For $p \in P$ and $I \in J(P)$, \textit{toggling $I$ at $p$} yields the symmetric difference $I \triangle \lbrace p \rbrace$ if $I \triangle \lbrace p \rbrace \in J(P)$ and returns $I$ otherwise.  We write $t_p \colon J(P) \rightarrow J(P)$ for the action of toggling at $p$, and we consider $p$ eligible to be toggled into $I$ if $t_p(I) = I \cup \lbrace p \rbrace$ and eligible to be toggled out of $I$ if $t_p(I) = I \setminus \lbrace p \rbrace$.  To capture eligibility formally, we define the following indicators: 
\begin{align*}
& \mathcal{T}_p^+ := \mathbbm{1}_{t_p(I) = I \cup \lbrace p \rbrace} \\
& \mathcal{T}_p^- := \mathbbm{1}_{t_p(I) = I \setminus \lbrace p \rbrace} \\
& \mathcal{T}_p := \mathcal{T}_p^+ - \mathcal{T}_p^-
\end{align*}

\begin{df}
A probability distribution $\mu$ on a distributive lattice $J(P)$ is \textit{toggle-symmetric} if $\mathbb{E}(\mu; \mathcal{T}_p^+) = \mathbb{E}(\mu; \mathcal{T}_p^-)$ for all $p \in P$.  
\end{df}

Hopkins's tCDE property requires agreement in down-degree expectation among all toggle-symmetric distributions.  

\begin{df}
A distributive lattice $J(P)$ satisfies the \textit{tCDE} property if $\mathbb{E}(\operatorname{uni}; \operatorname{ddeg}) = \mathbb{E}(\mu; \operatorname{ddeg})$ for all toggle-symmetric probability distributions $\mu$ on $J(P)$.  
\end{df}

We now state our main result before delving into the representation theory from which we obtain our uniform proof.  

\begin{thm} \label{prel}
Let $P$ be a minuscule poset.  Then $J(P)$ satisfies the tCDE property.  
\end{thm}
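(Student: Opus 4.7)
The plan is to prove tCDE by exhibiting a \emph{toggleability expression} for the down-degree: real constants $c$ and $\{c_p\}_{p \in P}$ such that
\[
\operatorname{ddeg}(I) \;=\; c \;+\; \sum_{p \in P} c_p \, \mathcal{T}_p(I) \qquad \text{for every } I \in J(P).
\]
Given such an expression, tCDE follows immediately: toggle-symmetry yields $\mathbb{E}(\mu; \mathcal{T}_p) = \mathbb{E}(\mu; \mathcal{T}_p^+) - \mathbb{E}(\mu; \mathcal{T}_p^-) = 0$ for every $p$, and linearity of expectation then gives $\mathbb{E}(\mu; \operatorname{ddeg}) = c$ for every toggle-symmetric $\mu$; in particular, this common value equals $\mathbb{E}(\operatorname{uni}; \operatorname{ddeg})$.

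To construct the expression, I would adapt the Rush--Shi strategy deployed in \cite{Rush} and \cite{Rush2}. Each $I \in J(P)$ is identified with a weight $\mu_I = \lambda - \sum_{p \in I}\beta_p$ of the minuscule representation $V(\lambda)$, where $\beta_p$ is the positive root attached to $p$; toggling at $p$ corresponds to the shift $\mu_I \mapsto \mu_I \pm \beta_p$ whenever the resulting lattice point is a weight of $V(\lambda)$. I would then search for a coweight $\varphi$ for which $\langle \mu_I, \varphi \rangle$ differs from $\operatorname{ddeg}(I)$ by contributions of the form $c_p \mathcal{T}_p(I)$. The minuscule condition $\langle \mu, \alpha^\vee \rangle \in \{-1, 0, 1\}$, which has been the central technical engine in both predecessors of this paper, should be exactly what forces these local corrections to cohere into a single global identity.

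The principal obstacle will be the calibration of the coefficients $c_p$ so that the identity holds \emph{pointwise} on every $I \in J(P)$, not merely on average over Fon-Der-Flaass orbits (which is what \cite{Rush2} achieved for $|I|$ and for the antichain cardinality $\#\max(I) = \operatorname{ddeg}(I)$). I expect a careful case analysis --- splitting by whether $p \in I$ and whether $p$ is toggleable, and tracking the interplay between maximal elements of $I$ and the simple reflections that strictly decrease $\mu_I$ --- will produce the correct $c_p$ in terms of root-theoretic data; experimentation with the product-of-two-chains case suggests $c_p$ will be a normalized measure of the corank of $p$ in $P$ (equivalently, the height of $\beta_p$). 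Once the pointwise identity is verified uniformly across the minuscule families via the representation-theoretic framework inherited from \cite{Rush} and \cite{Rush2}, tCDE follows by the linearity argument above.
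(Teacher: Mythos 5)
Your reduction is exactly the paper's: the paper likewise proves tCDE by showing $\operatorname{ddeg}$ differs from a constant by a linear combination of the indicators $\mathcal{T}_p$, whose expectations vanish for every toggle-symmetric $\mu$. But that reduction is the easy half, and the hard half --- the actual pointwise identity --- is both missing from your proposal and, as sketched, aimed in a direction that cannot work. You propose to find a \emph{coweight} $\varphi$ so that $\langle \mu_I, \varphi\rangle$ differs from $\operatorname{ddeg}(I)$ by toggle terms. The trouble is that any such linear functional of the weight is itself already a pure toggle combination with no constant term: writing $P_{\lambda}^i = \lbrace p_{i,1} < \cdots < p_{i,k} \rbrace$, the paper's Lemma~\ref{degtog} gives $(\phi(I), \alpha_i^{\vee}) = \sum_{j} \mathcal{T}_{p_{i,j}}(I)$, so $\langle \phi(I), \varphi\rangle = \sum_i a_i \sum_j \mathcal{T}_{p_{i,j}}(I)$ for $\varphi = \sum_i a_i \alpha_i^{\vee}$. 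If $\operatorname{ddeg}$ equaled such an expression plus toggle corrections, its expectation under every toggle-symmetric distribution (in particular under $\operatorname{uni}$) would be $0$, which is false. The constant $c = 2(\lambda,\lambda)/\Omega^2$ has to come from something invariant on the Weyl orbit, and linear functionals are not.

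What the paper actually does is quadratic in the weight, and this is the idea your proposal lacks. Because $(\phi(I),\alpha_i^{\vee}) \in \lbrace -1,0,1\rbrace$, at most one element of each fiber $P_{\lambda}^i$ is toggleable, and its position in the fiber is $f^i(I)$ or $f^i(I)+1$; weighting the indicators by position yields $\sum_j \bigl((j-1)\mathcal{T}^+_{p_{i,j}}(I) - j\,\mathcal{T}^-_{p_{i,j}}(I)\bigr) = f^i(I)\,(\phi(I),\alpha_i^{\vee})$ (Lemma~\ref{degtog}). The Rush--Wang formula $f^i(I) = 2\frac{(\lambda,\omega_i)-(\phi(I),\omega_i)}{(\alpha_i,\alpha_i)}$ (Lemma~\ref{whatisordercard}) converts this to inner products, and after assembling $X_i$ as in Lemma~\ref{final} and summing over $i$ in the simply laced case one gets $\sum_i X_i(I) = \frac{2}{\Omega^2}(\phi(I),\phi(I)) = \frac{2}{\Omega^2}(\lambda,\lambda)$, constant precisely because $W$ acts orthogonally and transitively on $\Lambda_{\lambda}$. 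This gives the explicit coefficients $c_{p_{i,j}} = (j-1) - 2\frac{(\lambda,\omega_i)}{(\alpha_i,\alpha_i)}$, i.e.\ they are governed by position within the label fiber $P_{\lambda}^i$ rather than by corank or root height as you guessed; and the passage from the simply laced statement to all minuscule posets uses the observation that every minuscule poset already arises from a simply laced algebra. Without the position-weighted identity, the $f^i$ formula, and the norm-square invariance, your "careful case analysis" has no mechanism to produce a globally constant term, so the proposal as it stands does not establish the theorem.
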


\subsection{Minuscule Posets} 

Let $\mathfrak{g}$ be a complex simple Lie algebra with Weyl group $W$ and weight lattice $\Lambda$.  Let $\lambda \in \Lambda$ be dominant, and let $V^{\lambda}$ be the irreducible $\mathfrak{g}$-representation with highest weight $\lambda$.  If $W$ acts transitively on the weights of $V^{\lambda}$, we say that $V^{\lambda}$ is a \textit{minuscule $\mathfrak{g}$-representation} with \textit{minuscule weight} $\lambda$.  In this case, the restriction to $W \lambda$ of the partial order on $\Lambda$ opposite to the root order is a distributive lattice, and the poset of join-irreducible elements $P_{\lambda}$ is the \textit{minuscule poset} for $V^{\lambda}$.  Our first task is to construct a map that captures the relationship $J(P_{\lambda}) \cong W \lambda$.  

Note that if $I \lessdot I'$ is a covering relation in $J(P_{\lambda})$, then there exists an element $p \in P_{\lambda}$ such that $I' \setminus I = \lbrace p \rbrace$, and toggling at $p$ transitions back and forth between $I$ and $I'$.  Similarly, if $\mu \lessdot \mu'$ is a covering relation in $W \lambda$, then there exists a simple root $\alpha \in \Lambda$ such that $\mu - \mu' = \alpha$, and the simple reflection $s_{\alpha} \in W$ interchanges $\mu$ and $\mu'$.  

The \textit{minuscule heap} for $V^{\lambda}$ appends to $P_{\lambda}$ the assignment of a simple root $\alpha(p)$ of $\mathfrak{g}$ to each element $p \in P_{\lambda}$ in order to identify each covering relation in $J(P_{\lambda})$ with a counterpart in $W \lambda$.  By stipulating a simple reflection $s_{\alpha(p)}$ to correspond to toggling at $p$ for all $p \in P_{\lambda}$, we realize an explicit isomorphism $J(P_{\lambda}) \xrightarrow{\sim} W \lambda$.  

Let $I$ be an order ideal of $P_{\lambda}$, and let $(p_1, p_2, \ldots, p_{\ell})$ be a linear extension of $I$.  Then $I = t_{p_{\ell}} t_{p_{\ell-1}} \cdots t_{p_1} (\varnothing)$, and we define \[\phi(I) := s_{\alpha(p_{\ell})} s_{\alpha(p_{\ell -1 })} \cdots s_{\alpha(p_1)} (\lambda).\]  

That $\phi \colon J(P_{\lambda}) \rightarrow W \lambda$ is a well-defined isomorphism is due to Stembridge \cite{Stembridge}, who introduced the heap labeling.  In Rush--Shi \cite{Rush}, it is shown that the the action of each simple reflection $s_{\alpha}$ actually corresponds under $\phi$ to that of the sequence of toggles at all elements of $P_{\lambda}$ to which the label $\alpha$ is affixed.  This lemma is the key to our work on minuscule posets.  

\begin{lem}[Rush--Shi \cite{Rush}] \label{equiviso}
Let $V$ be a minuscule $\mathfrak{g}$-representation with minuscule weight $\lambda$, and let $P_{\lambda}$ be the minuscule heap for $V$.  Let $\alpha$ be a simple root of $\mathfrak{g}$.  Let $t_{\alpha} := \prod_{p \in P_{\lambda}^{\alpha}} t_p$.  Then the following diagram is commutative.  
\[\renewcommand{\arraystretch}{1.0}
\begin{array}[c]{ccc}
J(P_{\lambda}) & \stackrel{\phi}{\rightarrow} & W \lambda \\
\downarrow \scriptstyle{t_{\alpha}} && \downarrow \scriptstyle{s_{\alpha}} \\
J(P_{\lambda}) & \stackrel{\phi}{\rightarrow} & W \lambda
\end{array}
\]
\end{lem}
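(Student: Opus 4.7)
The plan is to reduce the lemma to a local analysis on the sub-chain $P_\lambda^\alpha \subseteq P_\lambda$, and then to verify the resulting identity by induction on $|I|$.

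I would first invoke the structure theory of minuscule heaps (Stembridge \cite{Stembridge}) to record that $P_\lambda^\alpha$ is totally ordered as a chain $p_1 < p_2 < \cdots < p_k$, and that any two consecutive elements $p_i, p_{i+1}$ are separated in the heap by at least one element whose label is Dynkin-adjacent to $\alpha$; thus $p_{i+1}$ never covers $p_i$ in $P_\lambda$, so the toggles $t_{p_1}, \ldots, t_{p_k}$ pairwise commute and $t_\alpha$ is a well-defined unordered product.  Since $t_\alpha$ alters $I$ only at $P_\lambda^\alpha$-entries and $t_\alpha(I)$ remains an order ideal, both $I \cap P_\lambda^\alpha$ and $t_\alpha(I) \cap P_\lambda^\alpha$ are initial segments of the chain, say $\{p_1, \ldots, p_j\}$ and $\{p_1, \ldots, p_{j'}\}$.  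Because adding one $\alpha$-labeled element to an order ideal subtracts $\alpha$ from its $\phi$-image, the weight identity $\phi(t_\alpha(I)) = \phi(I) - (j'-j)\alpha$ holds, reducing the commutativity of the diagram to proving $j' - j = \langle \phi(I), \alpha^\vee \rangle$.

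For the base case $I = \varnothing$, the unique weight covered by $\lambda$ in $W\lambda$ is $\lambda - \alpha_0$ for the unique simple root $\alpha_0$ with $\langle \lambda, \alpha_0^\vee \rangle = 1$, forcing the unique minimal element of $P_\lambda$ to carry label $\alpha_0$.  Hence $t_{\alpha_0}(\varnothing) = \{p_1\}$ and $t_\alpha(\varnothing) = \varnothing$ for $\alpha \neq \alpha_0$, matching $\langle \lambda, \alpha^\vee \rangle$ in each case.  For the inductive step I would pick a maximal element $q$ of $I$ with label $\beta$, let $I_0 := I \setminus \{q\}$, and split on the Dynkin relation between $\alpha$ and $\beta$.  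When $\alpha$ and $\beta$ are Dynkin-unrelated, $t_q$ commutes with each $t_{p_i}$ and $s_\beta$ commutes with $s_\alpha$, so the result for $I_0$ transports immediately to $I$.  When $\alpha = \beta$, the element $q$ is itself some $p_{j}$, and removing $q$ shifts $j$ by one while altering $\langle \phi, \alpha^\vee \rangle$ by two, keeping both sides in balance.

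The main obstacle is the Dynkin-adjacent case $\alpha \neq \beta$, where $s_\beta$ shifts $\langle \cdot, \alpha^\vee \rangle$ nontrivially and the introduction of $q$ can flip the toggleability of some $p_i$'s.  The argument here requires the sharp local structure of the minuscule heap: each $p_i \in P_\lambda^\alpha$ has a tightly controlled pattern of $P_\lambda$-covers of each label $\beta$ Dynkin-adjacent to $\alpha$, and this pattern encodes the $\beta$-pairing of $\phi(I_0)$ in precisely the way needed to match the induced change in $j' - j$.  Extracting this pattern from the Stembridge heap axioms and using it to equate the two local counts will constitute the core combinatorial argument.
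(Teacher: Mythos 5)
This lemma is not proved in the paper at all: it is imported from Rush--Shi (their Theorem 6.3, restated here as Theorem~\ref{rushshi}), so your attempt can only be measured against that source's argument and on its own terms. Your setup is correct and matches the standard framework: $P_{\lambda}^{\alpha}$ is a chain containing no covering relations, so the toggles $t_p$, $p \in P_{\lambda}^{\alpha}$, commute and $t_{\alpha}$ is well defined; $I \cap P_{\lambda}^{\alpha}$ and $t_{\alpha}(I) \cap P_{\lambda}^{\alpha}$ are initial segments $\lbrace p_1, \ldots, p_j \rbrace$ and $\lbrace p_1, \ldots, p_{j'} \rbrace$; and, granting Stembridge's isomorphism (Theorem~\ref{structure}), commutativity of the diagram reduces to the identity $j' - j = (\phi(I), \alpha^{\vee})$. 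Your base case, the case of a maximal element $q \in I$ whose label is Dynkin-orthogonal to $\alpha$, and the case of label equal to $\alpha$ are all fine.

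The genuine gap is that the Dynkin-adjacent case of your induction is never carried out: you state that the needed local pattern of covers between consecutive $\alpha$-labeled elements ``will constitute the core combinatorial argument,'' but that case is precisely where the entire content of the lemma lives, since only the addition of an element $q$ with $(\alpha(q), \alpha^{\vee}) < 0$ can create or destroy toggleability of $\alpha$-elements, and it shifts $(\phi(I), \alpha^{\vee})$ by $-(\alpha(q), \alpha^{\vee})$ --- by $1$ in simply laced types but by $2$ when $(\alpha(q), \alpha^{\vee}) = -2$, a case you must also treat because the lemma is stated for arbitrary $\mathfrak{g}$ (e.g.\ types B and C). As written, the proposal is an outline whose hardest step is deferred. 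Note also that the induction on $|I|$ can be avoided altogether, which is closer in spirit to how the source proceeds: fix $I$, and argue directly on $(\phi(I), \alpha^{\vee}) \in \lbrace -1, 0, 1 \rbrace$ (Proposition~\ref{bourbaki}). If some $\alpha$-element is addable to $I$, then $I \lessdot I \cup \lbrace p \rbrace$ maps under $\phi$ to a covering relation subtracting $\alpha$, forcing $(\phi(I), \alpha^{\vee}) = 1$; if some $\alpha$-element is removable, the same reasoning applied to $I \setminus \lbrace p \rbrace \lessdot I$ forces $(\phi(I), \alpha^{\vee}) = -1$; so addable and removable $\alpha$-elements never coexist, and by Proposition~\ref{totorder} there is at most one of each. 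Conversely, when $(\phi(I), \alpha^{\vee}) = \pm 1$, the weight $s_{\alpha}\phi(I)$ covers or is covered by $\phi(I)$ in $\Lambda_{\lambda}$, and Theorem~\ref{structure} produces the unique $\alpha$-labeled element that is toggled. This yields $j' - j = (\phi(I), \alpha^{\vee})$ in all cases with no local heap analysis, which is exactly the piece your proposal leaves open.
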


We reframe our main result as follows.  

\begin{thm} \label{main}
Let $V$ be a minuscule $\mathfrak{g}$-representation with minuscule weight $\lambda$ and minuscule heap $P_{\lambda}$.  Suppose $\mathfrak{g}$ is simply laced, and let $\Omega$ denote the common length of the roots.  Then $P_{\lambda}$ satisfies the tCDE property, and, for all toggle-symmetric distributions $\mu$ on $J(P_{\lambda})$, 
\begin{align}
\mathbb{E}(\mu; \operatorname{ddeg}) = 2 \frac{(\lambda, \lambda)}{\Omega^2}.
\end{align}
\end{thm}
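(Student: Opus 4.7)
The plan is to establish the pointwise identity
\[
\operatorname{ddeg}(I) = \frac{2(\lambda, \lambda)}{\Omega^2} + \sum_{p \in P_\lambda} d_p\, \mathcal{T}_p(I), \qquad I \in J(P_\lambda),
\]
with real coefficients $d_p$ built from representation-theoretic data. Integrating against any toggle-symmetric $\mu$ and invoking $\mathbb{E}(\mu; \mathcal{T}_p)=0$ then yields the theorem.

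First, I would invoke Lemma \ref{equiviso} to translate between toggles and reflections. Summing $\mathcal{T}_p$ over $p \in P_\lambda^\alpha$ and matching the coefficient of $\alpha$ on the two sides of the commuting square gives
\[
\mathcal{T}_\alpha(I) := \sum_{p \in P_\lambda^\alpha} \mathcal{T}_p(I) = \frac{2(\phi(I), \alpha)}{\Omega^2} \in \{-1, 0, 1\},
\]
the containment coming from the defining property of minuscule weights. A standard structural property of minuscule heaps further gives $N_\alpha^+(I) \cdot N_\alpha^-(I) = 0$, where $N_\alpha^\pm(I) := \sum_{p \in P_\lambda^\alpha}\mathcal{T}_p^\pm(I)$, and therefore
\[
N_\alpha^+(I) + N_\alpha^-(I) = \mathcal{T}_\alpha(I)^2 = \frac{4(\phi(I), \alpha)^2}{\Omega^4}.
\]

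Next, I symmetrize. Since $\operatorname{ddeg} = \sum_p \mathcal{T}_p^-$ and $\sum_p \mathcal{T}_p^+ - \sum_p \mathcal{T}_p^- = \sum_p \mathcal{T}_p$ lies trivially in the toggle span, while $\sum_p(\mathcal{T}_p^+ + \mathcal{T}_p^-) = \sum_\alpha(N_\alpha^+ + N_\alpha^-)$, I obtain
\[
2 \operatorname{ddeg}(I) + \sum_p \mathcal{T}_p(I) = \frac{4}{\Omega^4}\sum_{\alpha \text{ simple}}(\phi(I),\alpha)^2.
\]
The theorem thus reduces to the pointwise congruence
\[
\sum_{\alpha \text{ simple}} (\phi(I),\alpha)^2 \equiv (\lambda,\lambda)\,\Omega^2 \pmod{\operatorname{span}\{\mathcal{T}_p : p \in P_\lambda\}}.
\]

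This final congruence is the principal obstacle, since $\sum_\alpha(\mu,\alpha)^2$ genuinely varies over $W\lambda$. The correctness of the constant $(\lambda,\lambda)\Omega^2$ is automatic from testing against the uniform measure (which is itself toggle-symmetric); the task is to produce explicit coefficients realizing the pointwise defect. My plan is to combine the Weyl-invariant identity $\sum_{\beta > 0}(\mu,\beta)^2 = h^\vee(\lambda,\lambda)$ (with $h^\vee$ the dual Coxeter number of $\mathfrak{g}$, the sum constant on $W\lambda$) with the bijection between $P_\lambda$ and $\{\beta > 0 : \langle \lambda, \beta^\vee\rangle = 1\}$ arising from the heap structure, so as to transfer the burden to non-simple positive roots. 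I then unpack $(\phi,\beta)^2$ for non-simple $\beta$ by height-induction, exploiting the $W$-equivariance provided by Lemma \ref{equiviso} to collapse mixed terms into the toggle span. The simply-laced hypothesis is essential throughout, ensuring common root length $\Omega$ and keeping the Cartan integers $\langle \beta,\alpha^\vee\rangle$ in a uniform range.
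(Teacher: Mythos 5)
Your reduction in the first half is sound: the identity $\sum_{p \in P_\lambda^\alpha}\mathcal{T}_p(I) = (\phi(I),\alpha^\vee)$ is exactly the first identity of the paper's Lemma~\ref{degtog}, the fact that at most one element of each fiber $P_\lambda^\alpha$ is togglable (so $N_\alpha^+ N_\alpha^- = 0$ and $N_\alpha^+ + N_\alpha^- = \mathcal{T}_\alpha^2$) is correct, and the resulting pointwise identity $2\operatorname{ddeg}(I) + \sum_p \mathcal{T}_p(I) = \frac{4}{\Omega^4}\sum_{\alpha}(\phi(I),\alpha)^2$ is a legitimate reformulation. But you have correctly identified, and then not overcome, the principal obstacle: showing that $\sum_{\alpha \text{ simple}}(\phi(I),\alpha)^2$, which genuinely varies over $W\lambda$ (it counts, up to the factor $\Omega^4/4$, the simple coroots pairing nonzero with $\phi(I)$), lies in $\text{constant} + \operatorname{span}\{\mathcal{T}_p\}$. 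The proposed mechanism --- the dual Coxeter number identity plus a ``height induction'' that ``collapses mixed terms into the toggle span'' --- is not an argument: transferring to non-simple positive roots replaces one orbit-dependent sum of squares by another, and no explicit coefficients on the $\mathcal{T}_p$, nor any inductive step producing them, are exhibited. Your unweighted symmetrization $\sum_p(\mathcal{T}_p^+ + \mathcal{T}_p^-)$ discards precisely the information the paper uses to close this gap: Lemma~\ref{degtog}(3.3) weights the toggles in each fiber by their position $j$ in the chain $P_\lambda^i$, giving $\sum_j (j-1)\mathcal{T}^+_{p_{i,j}} - j\,\mathcal{T}^-_{p_{i,j}} = f^i(I)\,(\phi(I),\alpha_i^\vee)$, and the Rush--Wang formula (Lemma~\ref{whatisordercard}) converts $f^i(I)$ into $2\frac{(\lambda,\omega_i)-(\phi(I),\omega_i)}{(\alpha_i,\alpha_i)}$; because the $\omega_i$ are dual to the $\alpha_i^\vee$, summing over $i$ telescopes to $\frac{2}{\Omega^2}(\phi(I),\phi(I)) = \frac{2(\lambda,\lambda)}{\Omega^2}$, constant on the orbit. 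Nothing in your sketch plays the role of these position weights or of the $\omega_i$/$\alpha_i^\vee$ duality, and pairing $\phi(I)$ twice against simple roots does not telescope to the norm.

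A secondary gap: even granting your congruence with \emph{some} constant, you claim the value $(\lambda,\lambda)\Omega^2$ is ``automatic from testing against the uniform measure.'' That only works if you already know $\mathbb{E}(\operatorname{uni};\operatorname{ddeg}) = 2\frac{(\lambda,\lambda)}{\Omega^2}$, which is part of what is to be proved (and is known only case-by-case absent an argument like the paper's). So as written your plan could at best yield the tCDE property with an unidentified constant; the explicit formula in the theorem still requires the computation that the paper gets for free from $(\phi(I),\phi(I)) = (\lambda,\lambda)$, using orthogonality and transitivity of the $W$-action on $\Lambda_\lambda$.
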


\begin{rem}
Given a minuscule poset $P$, there exists a simply laced $\mathfrak{g}$ for which $P$ arises as the minuscule poset of a minuscule $\mathfrak{g}$-representation.  Hence Theorem~\ref{main} indeed entails Theorem~\ref{prel}.
\end{rem}

Our approach to the proof of Theorem~\ref{main} follows the Rush--Wang \cite{Rush2} paradigm: express everything possible in terms of inner products of roots and weights.  A principal innovation of Rush--Wang \cite{Rush2} in this direction is a formula for the number of elements in an order ideal $I$ of a minuscule heap $P_{\lambda}$ labeled by a simple root $\alpha$, which they denote by $f^{\alpha}(I)$.  

For $p \in P_{\lambda}$ and $I \in J(P_{\lambda})$, we find formulas that relate linear combinations of indicator values $\mathcal{T}_p^+(I)$ and $\mathcal{T}_p^-(I)$ to inner products and the cardinalities $f^{\alpha}(I)$.  Incorporating the Rush--Wang \cite{Rush2} formula into our formulas, we are ultimately able to show $\operatorname{ddeg}$ differs by a constant from a particular combination of indicators, for which the expectation with respect to any toggle-symmetric distribution is zero.  

\subsection{Applications}

We close the introduction with the observation that our proof of Theorem~\ref{main} immediately results in a uniform proof that the antichain cardinality statistic exhibits homomesy with respect to the Fon-Der-Flaass action and gyration on order ideals of a minuscule poset.  To explain, we recall the definition of homomesy.  

\begin{df} \label{homomesy}
Let $\mathcal{S}$ be a finite set, and let $\tau \colon \mathcal{S} \rightarrow \mathcal{S}$ be an action on $\mathcal{S}$ of order $m$.  A function $f \colon \mathcal{S} \rightarrow \mathbb{R}$ \textit{exhibits homomesy} with respect to $\tau$ if there exists a constant $c \in \mathbb{R}$ such that, for all $x \in S$, the following equality holds:
\[
\frac{1}{m} \sum_{i=0}^{m-1} f(\tau^i(x)) = c.
\]

In this case, $f$ is \textit{$c$-mesic} with respect to $\tau$. 
\end{df}

The connection between tCDE and homomesy is spelled out in Hopkins \cite{Hopkins}.  In brief, let $P$ be a poset, and let $\Phi$ be an action on $J(P)$.  The \textit{antichain cardinality statistic} on $J(P)$ is just the function $\operatorname{ddeg} \colon J(P) \rightarrow \mathbb{R}$.  Suppose that, for each $\Phi$-orbit $\mathcal{O} \subset J(P)$, the distribution supported uniformly on $\mathcal{O}$ is toggle-symmetric.  If $J(P)$ satisfies the tCDE property, then $\operatorname{ddeg}$ is $c$-mesic with respect to $\Phi$ with $c = \mathbb{E}(\operatorname{uni}; \operatorname{ddeg})$.  

For ranked posets $P$ (including all minuscule posets), a result of Striker \cite{Strikertoggle}, as phrased by Chan, Haddadan, Hopkins, and Moci \cite{Chan2}, is that the distribution supported uniformly on a $\Phi$-orbit $\mathcal{O} \subset J(P)$ is toggle-symmetric if $\Phi$ is the Fon-Der-Flaass action or gyration.  Hence the antichain cardinality statistic is $c$-mesic with respect to both actions with $c = 2 \frac{(\lambda, \lambda)}{\Omega^2}$.

Thus, for the Fon-Der-Flaass action, we obtain an alternate uniform proof of Theorem 1.4 of Rush--Wang \cite{Rush2}, and we achieve a new result (and new uniform proof) for gyration.  

The rest of this article is organized as follows.  In section 2, we review the background on minuscule posets and minuscule heaps, and we restate the Rush--Shi \cite{Rush} lemma.  In section 3, we state the Rush--Wang \cite{Rush2} formula and derive our formulas.  Then we show how Theorem~\ref{main} follows.  

\section{Minuscule Posets and Minuscule Heaps}

This section is an overview of the background behind the minuscule posets and their labeled incarnations, the minuscule heaps.  Because we rely on the same preliminaries as Rush and Wang \cite{Rush2}, we content ourselves with a summary of section 2 of \cite{Rush2}, to which we refer the curious reader for a more thorough treatment of the requisite material.  

\subsection{Minuscule Posets}

Let $\mathfrak{g}$ be a complex simple Lie algebra, and let $\mathfrak{h}$ be a choice of Cartan subalgebra.  Let $R \subset \mathfrak{h}^*$ be the roots of $\mathfrak{g}$, and let $\mathfrak{h}_{\mathbb{R}}^*$ be the real vector space generated by $R$.  Recall that $\mathfrak{h}_{\mathbb{R}}^*$ is equipped with an inner product, which we denote by $(\cdot, \cdot)$, induced from the Killing form on $\mathfrak{g}$ via its restriction to $\mathfrak{h}$.  Furthermore, $R$ is a reduced root system for the inner product space $\mathfrak{h}_{\mathbb{R}}^*$  (cf. Kirillov \cite{Kirillov}, Theorem 7.3). 

For all $\alpha \in R$, let $\alpha^{\vee} := 2 \frac{\alpha}{(\alpha,\alpha)}$ be the coroot associated to $\alpha$.  Let $\Pi = \lbrace \alpha_1, \alpha_2, \ldots, \alpha_t \rbrace$ be a choice of set of simple roots for $R$, and write $\Pi^{\vee} := \lbrace \alpha_1^{\vee}, \alpha_2^{\vee}, \ldots, \alpha_t^{\vee} \rbrace$ for the corresponding set of simple coroots.  The sets $\Pi$ and $\Pi^{\vee}$ both constitute bases for $\mathfrak{h}_{\mathbb{R}}^*$, and we refer to the lattices $\Phi$ and $\Phi^{\vee}$ in $\mathfrak{h}_{\mathbb{R}}^*$ generated over $\mathbb{Z}$ by $\Pi$ and $\Pi^{\vee}$ as the \textit{root} and \textit{coroot lattices}, respectively.  

The \textit{weight lattice} $\Lambda$, which comprises the weights of $\mathfrak{g}$, is the dual lattice to the coroot lattice $\Phi^{\vee}$.  The basis corresponding to $\Pi^{\vee}$ is the set of \textit{fundamental weights}.  

\begin{df} \label{weight}
A functional $\lambda \in \mathfrak{h}_{\mathbb{R}}^*$ is a \textit{weight} of $\mathfrak{g}$ if $(\lambda, \alpha_i^{\vee}) \in \mathbb{Z}$ for all $1 \leq i \leq t$.  
\end{df}

\begin{df} \label{dominant}
A weight $\lambda \in \Lambda$ is \textit{dominant} if $(\lambda, \alpha_i^{\vee}) \geq 0$ for all $1 \leq i \leq t$.  
\end{df}  

\begin{df} \label{fundamental}
The \textit{fundamental weights} $\omega_1, \omega_2, \ldots, \omega_t$ are defined by the relations $(\omega_i, \alpha_j^{\vee}) = \delta_{ij}$ for $1 \leq i, j \leq t$, where $\delta_{ij}$ denotes the Kronecker delta.     
\end{df}

The dominant weights in $\Lambda$ index the finite-dimensional irreducible representations of $\mathfrak{g}$.  

\begin{thm}[Kirillov \cite{Kirillov}, Corollary 8.24] \label{weightrep}
For all $\lambda \in \Lambda^+$, there exists a finite-dimensional irreducible representation $V^{\lambda}$ of $\mathfrak{g}$ with highest weight $\lambda$.  Furthermore, the map $\lambda \mapsto [V^{\lambda}]$ defines a bijection between $\Lambda^+$ and the set of isomorphism classes of finite-dimensional irreducible $\mathfrak{g}$-representations.    
\end{thm}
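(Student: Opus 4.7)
The plan is to construct $V^{\lambda}$ as the irreducible quotient of the Verma module $M(\lambda)$ and then to establish finite-dimensionality via integrability. First, I would fix the triangular decomposition $\mathfrak{g} = \mathfrak{n}^- \oplus \mathfrak{h} \oplus \mathfrak{n}^+$ associated to the choice of simple roots $\Pi$, and for $\lambda \in \mathfrak{h}^*$ set $M(\lambda) := U(\mathfrak{g}) \otimes_{U(\mathfrak{b})} \mathbb{C}_{\lambda}$, where $\mathfrak{b} := \mathfrak{h} \oplus \mathfrak{n}^+$ acts on $\mathbb{C}_{\lambda}$ by $\lambda$ on $\mathfrak{h}$ and trivially on $\mathfrak{n}^+$. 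The Poincar\'e--Birkhoff--Witt theorem yields a weight-space decomposition of $M(\lambda)$ with weights of the form $\lambda - \sum_i n_i \alpha_i$ for $n_i \in \mathbb{Z}_{\geq 0}$ and a one-dimensional $\lambda$-weight space. It follows that $M(\lambda)$ has a unique maximal proper submodule, and I would take $V^{\lambda} := L(\lambda)$ to be the resulting irreducible quotient.

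The heart of the argument, which I anticipate as the main obstacle, is showing that $L(\lambda)$ is finite-dimensional when $\lambda \in \Lambda^+$. The strategy is to prove that $L(\lambda)$ is integrable, meaning each $\mathfrak{sl}_2$-triple $(e_i, h_i, f_i)$ attached to $\alpha_i \in \Pi$ acts locally finitely. Letting $v_{\lambda}$ denote a highest weight vector and writing $m_i := (\lambda, \alpha_i^{\vee}) \in \mathbb{Z}_{\geq 0}$, I would verify via the $\mathfrak{sl}_2$-relations that $f_i^{m_i + 1} v_{\lambda}$ is annihilated by every $e_j$; hence it generates a proper submodule of $M(\lambda)$ and vanishes in $L(\lambda)$. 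Consequently the $\mathfrak{sl}_2(\alpha_i)$-submodule of $L(\lambda)$ generated by $v_{\lambda}$ is finite-dimensional, and I would propagate this to all of $L(\lambda)$ by observing that the set of $\mathfrak{sl}_2(\alpha_i)$-locally-finite vectors is a $\mathfrak{g}$-submodule. Integrability then implies that the support of $L(\lambda)$ is stable under the Weyl group $W$ and contained in the finite set $\{\mu \in \Lambda : \mu \leq \lambda \text{ and } w\mu \leq \lambda \text{ for all } w \in W\}$, whence $\dim L(\lambda) < \infty$.

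For the bijection, injectivity of $\lambda \mapsto [V^{\lambda}]$ is immediate from the characterization of $L(\lambda)$ by its highest weight. For surjectivity, given a finite-dimensional irreducible $\mathfrak{g}$-module $V$, a direct weight-space argument using $\dim V < \infty$ furnishes a nonzero $\mathfrak{n}^+$-invariant weight vector $v$ of some weight $\lambda$. Simplicity gives $V = U(\mathfrak{g}) v$, and finite-dimensionality forces $(\lambda, \alpha_i^{\vee}) \in \mathbb{Z}_{\geq 0}$ for each $i$, else the chain $v, f_i v, f_i^2 v, \ldots$ would consist of nonzero vectors of distinct weights. Thus $\lambda \in \Lambda^+$, and $V$ is a highest-weight quotient of $M(\lambda)$; by irreducibility $V \cong L(\lambda) = V^{\lambda}$, completing the proof.
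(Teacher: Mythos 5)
The paper offers no proof of this statement: it is quoted as background from the cited textbook of Kirillov (Corollary 8.24), whose treatment proceeds exactly through highest-weight theory. Your argument --- constructing $V^{\lambda}$ as the irreducible quotient $L(\lambda)$ of the Verma module $M(\lambda)$, proving finite-dimensionality via the vanishing of $f_i^{(\lambda, \alpha_i^{\vee})+1} v_{\lambda}$ in $L(\lambda)$, local finiteness of the $\mathfrak{sl}_2$-triples, $W$-stability and finiteness of the support, and then deducing the bijection from the highest-weight characterization together with the existence of a dominant integral highest weight in any finite-dimensional irreducible module --- is correct and is essentially the standard proof found in that source.
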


Let $V$ be a finite-dimensional irreducible representation of $\mathfrak{g}$.  For all $\mu \in \Lambda$, the weight $\mu$ of $\mathfrak{g}$ is a \textit{weight} of $V$ if the weight space associated to $\mu$, namely, \[\lbrace v \in V : hv = \mu(h)v \text{  } \forall h \in \mathfrak{h} \rbrace,\] is nonzero.  For all $\lambda \in \Lambda^+$, we write $\Lambda_{\lambda} \subset \Lambda$ for the (finite) subset comprising the weights of $V^{\lambda}$.  

For all roots $\alpha \in R$, let the \textit{reflection} $s_{\alpha}$ associated to $\alpha$ be the orthogonal involution on $\mathfrak{h}_{\mathbb{R}}^*$ given by $\lambda \mapsto \lambda - (\lambda, \alpha^{\vee}) \alpha$.  Recall that the \textit{Weyl group} $W$ of $\mathfrak{g}$ is the subgroup of $O(\mathfrak{h}_{\mathbb{R}}^*)$ generated by the set of \textit{simple reflections} $\lbrace s_{i}\rbrace_{i=1}^t$, where $s_i := s_{\alpha_i}$ for all $1 \leq i \leq t$.  For all $\lambda \in \Lambda^+$, the action of $W$ on $\Lambda$ restricts to an action on $\Lambda_{\lambda}$ (cf. Kirillov \cite{Kirillov}, Theorem 8.8).  

At last we come to the definition of a minuscule representation, which underlies that of a minuscule poset.  

\begin{df} \label{minurep}
Let $\lambda$ be a dominant weight of $\mathfrak{g}$.  Then $V^{\lambda}$ is \textit{minuscule} with \textit{minuscule weight} $\lambda$ if the action of $W$ on $\Lambda_{\lambda}$ is transitive.  
\end{df}

The last ingredient in the definition of a minuscule poset is a partial order on the weights $\Lambda_{\lambda}$.  One choice, and that taken in Rush--Shi \cite{Rush}, is the restriction to $\Lambda_{\lambda}$ of the \textit{root order} on $\Lambda$, viz., the transitive closure of the relations $\mu \lessdot \nu$ for $\mu, \nu \in \Lambda$ satisfying $\nu - \mu \in \Pi$.  Here, however, we follow Proctor \cite{Proctor} and Rush--Wang \cite{Rush2}, and opt for the opposite order on $\Lambda_{\lambda}$.  

\begin{prop}
Let $\lambda \in \Lambda^+$ such that the $\mathfrak{g}$-representation $V^{\lambda}$ is minuscule.  Then $\Lambda_{\lambda}$ is a distributive lattice.  
\end{prop}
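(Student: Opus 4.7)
The plan is to derive the distributive lattice structure on $\Lambda_\lambda$ from its identification with the order-ideal lattice $J(P_\lambda)$ of the minuscule heap $P_\lambda$ via the bijection $\phi$ outlined in the introduction. This identification rests on a signature property of minuscule weights that governs the local structure of the poset.

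First, I would establish the signature property: for every root $\alpha \in R$ and every $\mu \in \Lambda_\lambda$, the pairing $(\mu, \alpha^\vee)$ lies in $\{-1, 0, 1\}$. This follows from the standard $\mathfrak{sl}_2$-string argument applied to the copy of $\mathfrak{sl}_2$ generated by the $\alpha$-root spaces: if $(\mu, \alpha^\vee) = c > 0$, then $\mu, \mu - \alpha, \ldots, \mu - c\alpha$ are all weights of $V^\lambda$. Since $W$ preserves the inner product, the orbit $\Lambda_\lambda$ consists of vectors of common length; an elementary computation shows that $\mu - k \alpha$ has the same length as $\mu$ only for $k \in \{0, c\}$, so intermediate weights $\mu - k\alpha$ with $0 < k < c$ cannot lie in $\Lambda_\lambda$. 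This contradicts transitivity of the $W$-action on $\Lambda_\lambda$ unless $c \leq 1$, and symmetrically $c \geq -1$.

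Second, from the signature property I would extract the local structure of $\Lambda_\lambda$ under the opposite-to-root order: every covering relation $\mu \lessdot \mu'$ has the form $\mu' - \mu = \alpha_i$ for some simple root $\alpha_i$, and conversely, if $(\mu, \alpha_i^\vee) = -1$, then $s_{\alpha_i}(\mu) = \mu + \alpha_i$ lies in $\Lambda_\lambda$ and yields a cover $\mu \lessdot \mu + \alpha_i$. Labeling each edge of the Hasse diagram by the corresponding simple root gives the input to Stembridge's heap construction \cite{Stembridge}, which produces the minuscule heap $P_\lambda$ together with the order-preserving bijection $\phi \colon J(P_\lambda) \xrightarrow{\sim} \Lambda_\lambda$.

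Since $J(P_\lambda)$ is a distributive lattice by Birkhoff's representation theorem, the bijection $\phi$ transports this structure to $\Lambda_\lambda$. The main obstacle in a from-scratch development is the middle step: verifying that the edge labels on the Hasse diagram cohere into a consistent poset structure, or equivalently that any two linear extensions of a maximal chain from $w_0 \lambda$ to $\lambda$ differ by a sequence of commutations among labels attached to commuting simple reflections. The paper will reasonably defer this verification to Stembridge's foundational work rather than reproducing it, consistent with its stated posture of summarizing rather than redeveloping the prerequisite material from \cite{Rush2}.
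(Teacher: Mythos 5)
Your proposal is correct and takes essentially the same route as the paper: the paper's proof is nothing more than a citation to Proctor (Propositions 3.2 and 4.1) and, for a uniform argument, to Stembridge (Theorems 6.1 and 7.1), and your sketch --- the $\mathfrak{sl}_2$-string signature property, the simple-root cover structure, and transport of distributivity from $J(P_{\lambda})$ through the heap isomorphism --- is an outline of exactly that Stembridge-based uniform proof, deferring the same coherence verification to \cite{Stembridge} that the paper does. Nothing in your outline conflicts with the paper, which records the signature property separately as Proposition~\ref{bourbaki} and the heap isomorphism as Theorem~\ref{structure}.
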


\begin{proof}
See Proctor \cite{Proctor}, Propositions 3.2 and 4.1.  For a uniform proof, see Stembridge \cite{Stembridge}, Theorems 6.1 and 7.1.  
\end{proof}

\begin{df}
Let $V$ be a minuscule representation of $\mathfrak{g}$ with minuscule weight $\lambda$.  The restriction of the partial order on $\Lambda_{\lambda}$ to its join-irreducible elements is the \textit{minuscule poset} for $V$, which we denote by $P_{\lambda}$.  
\end{df}

\begin{df}
Let $P$ be a poset.  Then $P$ is \textit{minuscule} if there exists a complex simple Lie algebra $\mathfrak{g}$ and a dominant weight $\lambda$ of $\mathfrak{g}$ for which the $\mathfrak{g}$-representation $V^{\lambda}$ is minuscule and $P \cong P_{\lambda}$.  
\end{df}

\subsection{Minuscule Heaps}

Suppose that $\lambda$ is a dominant weight of $\mathfrak{g}$ for which $V^{\lambda}$ is minuscule.  For all $\mu \in \Lambda_{\lambda}$, we denote by $\Lambda_{\lambda}^{\mu}$ the restriction of the partial order on $\Lambda_{\lambda}$ to the set $\lbrace \nu \in \Lambda_{\lambda} : \nu \leq \mu \rbrace$.  To realize an explicit isomorphism $J(P_{\lambda}) \cong \Lambda_{\lambda}$, we consider a family of labeled posets $\lbrace P_{\lambda, \mu} \rbrace_{\mu \in \Lambda_{\lambda}}$, which we refer to as \textit{heaps}.  

What follows is a brief review of the theory of heaps in the context of minuscule representations.  Taken together, the results of Proctor \cite{Proctor}, Stembridge \cite{Stembridge}, and Stembridge \cite{Stembridge2} suffice to prove any claim in 2.10 -  2.18.  Stembridge's proof of a slightly more general version of Theorem~\ref{structure} is paraphrased in Rush--Shi \cite{Rush}, but a cleaner proof of Theorem~\ref{structure} itself, specific to the minuscule setting, is carried out in Rush--Wang \cite{Rush2} via Proposition~\ref{heapiso}.

Theorem~\ref{structure} builds the explicit isomorphism $J(P_{\lambda}) \cong \Lambda_{\lambda}$.  Then Theorem~\ref{rushshi} presents the insight of Rush--Shi \cite{Rush} into its inner workings.

\begin{prop}[Bourbaki \cite{Bourbaki}, Exercise VI.I.24] \label{bourbaki}   
Let $\mu \in \Lambda_{\lambda}$.  Then $(\mu, \alpha_i^{\vee}) \in \lbrace -1, 0, 1 \rbrace$ for all $1 \leq i \leq t$.  
\end{prop}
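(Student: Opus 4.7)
The plan is to exploit two features of the minuscule representation $V^{\lambda}$. First, because $W$ acts transitively on $\Lambda_{\lambda}$ and preserves the inner product $(\cdot, \cdot)$, every weight $\mu \in \Lambda_{\lambda}$ satisfies $(\mu, \mu) = (\lambda, \lambda)$. Second, for any finite-dimensional $\mathfrak{g}$-representation $V$ and any weight $\mu$ of $V$, the weights of $V$ of the form $\mu + k\alpha_i$ ($k \in \mathbb{Z}$) occupy an unbroken string $\{\mu - p\alpha_i, \ldots, \mu + q\alpha_i\}$ with $p - q = (\mu, \alpha_i^{\vee})$. This is the standard consequence of applying $\mathfrak{sl}_2$-representation theory to the copy of $\mathfrak{sl}_2 \subset \mathfrak{g}$ attached to the simple root $\alpha_i$; it requires no minuscule hypothesis.

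To carry out the proof, fix $\mu \in \Lambda_{\lambda}$ and a simple root $\alpha_i$, and set $n := (\mu, \alpha_i^{\vee})$. Since $s_i(\mu) \in \Lambda_{\lambda}$ and $(s_i(\mu), \alpha_i^{\vee}) = -n$, it suffices to rule out $n \geq 2$. Assuming $n \geq 2$, the string property gives $p \geq q + 1 \geq 1$, so $\mu - \alpha_i \in \Lambda_{\lambda}$. By the first feature above, $(\mu - \alpha_i, \mu - \alpha_i) = (\mu, \mu)$. On the other hand, using $(\mu, \alpha_i) = \tfrac{n}{2}(\alpha_i, \alpha_i)$, direct expansion yields
\[
(\mu - \alpha_i, \mu - \alpha_i) = (\mu, \mu) - 2(\mu, \alpha_i) + (\alpha_i, \alpha_i) = (\mu, \mu) + (1 - n)(\alpha_i, \alpha_i).
\]
Since $(\alpha_i, \alpha_i) > 0$, these two expressions agree only when $n = 1$, contradicting $n \geq 2$. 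Hence $(\mu, \alpha_i^{\vee}) \in \{-1, 0, 1\}$.

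The only substantive step is the invocation of the $\alpha_i$-string property to produce the neighboring weight $\mu - \alpha_i \in \Lambda_{\lambda}$; that invocation is entirely standard $\mathfrak{sl}_2$-theory and is the main (modest) obstacle in the argument. Beyond it, the minuscule hypothesis enters purely through the constancy of $(\mu, \mu)$ on the $W$-orbit $\Lambda_{\lambda}$, and a single norm computation closes out the bound.
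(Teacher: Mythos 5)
Your proof is correct. Note that the paper itself gives no argument for this proposition: it is stated with a citation to Bourbaki (Exercise VI.1.24), and the surrounding text only says that the results of Proctor and Stembridge suffice to prove the claims in that block. What you supply is essentially the standard intended argument: combine the $\alpha_i$-string property (pure $\mathfrak{sl}_2$-theory, no minuscule hypothesis) with the fact that, for a minuscule representation, $\Lambda_{\lambda}$ is a single $W$-orbit on which the $W$-invariant norm $(\mu,\mu)$ is constant, and then observe that if $n=(\mu,\alpha_i^{\vee})\geq 2$ the string forces $\mu-\alpha_i\in\Lambda_{\lambda}$, whose norm $(\mu,\mu)+(1-n)(\alpha_i,\alpha_i)$ cannot equal $(\mu,\mu)$. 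Your reduction to $n\geq 2$ via $s_i(\mu)$ is sound, and the norm computation is right. One cosmetic point: with $n\geq 2$ the string property actually gives $p\geq q+2$, not merely $p\geq q+1$; the weaker bound you state is all you use (only $p\geq 1$ is needed), so nothing is lost. Alternatively, you could avoid quoting the string lemma in full generality by noting that $s_i(\mu)=\mu-n\alpha_i\in\Lambda_{\lambda}$ and that the weights between $\mu$ and $s_i(\mu)$ on the $\alpha_i$-string are also weights of $V^{\lambda}$, which yields $\mu-\alpha_i\in\Lambda_{\lambda}$ just as directly. In short: correct, self-contained, and it fills in a step the paper delegates to a citation.
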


\begin{prop} \label{cover}
Let $\mu \in \Lambda_{\lambda}$.  If $\mu \lessdot \mu - \alpha_i$ is a covering relation, then $s_i (\mu) = \mu - \alpha_i$.
\end{prop}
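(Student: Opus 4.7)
The plan is to rely on the definition $s_i(\mu) = \mu - (\mu, \alpha_i^\vee)\alpha_i$ and pin down $(\mu, \alpha_i^\vee)$, since the claim reduces immediately to verifying $(\mu, \alpha_i^\vee) = 1$. Proposition~\ref{bourbaki} already restricts $(\mu, \alpha_i^\vee)$ to $\{-1, 0, 1\}$, so everything comes down to excluding the two non-positive possibilities. In principle one could do this via $\mathfrak{sl}_2$-string arguments, but I would instead use the minuscule hypothesis directly, keeping the section self-contained.

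Since the covering relation $\mu \lessdot \mu - \alpha_i$ places both $\mu$ and $\mu - \alpha_i$ in $\Lambda_{\lambda}$, and since $V^{\lambda}$ is minuscule (Definition~\ref{minurep}), the two weights lie in a single $W$-orbit. Because $W$ is realized as a subgroup of $O(\mathfrak{h}_{\mathbb{R}}^*)$, Weyl-conjugate weights share a common squared norm, so
\[
(\mu, \mu) = (\mu - \alpha_i, \mu - \alpha_i) = (\mu, \mu) - 2(\mu, \alpha_i) + (\alpha_i, \alpha_i).
\]
This simplifies to $2(\mu, \alpha_i) = (\alpha_i, \alpha_i)$, i.e., $(\mu, \alpha_i^\vee) = 1$, and plugging back into the reflection formula yields $s_i(\mu) = \mu - \alpha_i$.

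There is no genuine obstacle here: the entire argument rests on the $W$-orbit transitivity baked into the definition of a minuscule representation, together with the orthogonality of $W$, and Proposition~\ref{bourbaki} turns out to be superfluous for this computation. Notice also that the covering hypothesis is not actually used beyond the membership $\mu - \alpha_i \in \Lambda_{\lambda}$, so the same proof establishes a slightly stronger statement that will be convenient to have on hand.
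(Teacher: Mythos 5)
Your argument is correct. The paper itself does not write out a proof of this proposition: it is one of the background facts (2.10--2.18) deferred to Proctor, Stembridge, and Rush--Wang, where such statements are customarily obtained from Proposition~\ref{bourbaki} together with $\mathfrak{sl}_2$-string (weight-string) considerations --- if $\mu$ and $\mu-\alpha_i$ are both weights of a minuscule representation, the $\alpha_i$-string through $\mu$ forces $(\mu,\alpha_i^\vee)=1$. Your route is different and more elementary: you use only the transitivity of $W$ on $\Lambda_{\lambda}$ (the definition of minuscule) and the fact that $W \subset O(\mathfrak{h}_{\mathbb{R}}^*)$, so all weights of $V^{\lambda}$ share the norm $(\lambda,\lambda)$, and expanding $(\mu-\alpha_i,\mu-\alpha_i)=(\mu,\mu)$ gives $(\mu,\alpha_i^\vee)=1$ directly, bypassing Proposition~\ref{bourbaki} entirely. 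This equal-norm computation is uniform, self-contained within the section, and in fact is the same device the paper itself invokes at the end of the proof of Theorem~\ref{main} (``$W$ is orthogonal and acts transitively on $\Lambda_{\lambda}$''). Your closing remark is also accurate: the covering hypothesis enters only through the membership $\mu-\alpha_i \in \Lambda_{\lambda}$, so your argument proves the slightly stronger statement that $s_i(\mu)=\mu-\alpha_i$ whenever both $\mu$ and $\mu-\alpha_i$ lie in $\Lambda_{\lambda}$.
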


\begin{df} \label{lamminus}
Let $w \in W$.  Then $w$ is \textit{$\lambda$-minuscule} if there exists a reduced word $w = s_{i_{\ell}} s_{i_{\ell-1}} \cdots s_{i_1}$ such that \[\lambda \lessdot s_{i_1} \lambda \lessdot \cdots \lessdot (s_{i_{\ell}} s_{i_{\ell-1}} \cdots s_{i_1}) \lambda = w \lambda\] is a saturated chain in $\Lambda_{\lambda}$.  
\end{df}

\begin{prop} \label{everyred}
Let $w \in W$.  If $w$ is $\lambda$-minuscule and $w = s_{i_{\ell}} s_{i_{\ell-1}} \cdots s_{i_1}$ is a reduced word, then \[\lambda \lessdot s_{i_1} \lambda \lessdot \cdots \lessdot (s_{i_{\ell}} s_{i_{\ell-1}} \cdots s_{i_1}) \lambda = w \lambda\] is a saturated chain in $\Lambda_{\lambda}$.  
\end{prop}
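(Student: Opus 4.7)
The plan is to combine the Matsumoto--Tits theorem---that any two reduced words for a fixed Weyl group element are related by a sequence of braid moves---with the sharp minuscule constraint of Proposition~\ref{bourbaki}, so as to rule out braid moves of length $m_{ij} \geq 3$ on reduced words that witness saturated chains, and then to check that commutation moves preserve the saturated-chain property. First, using Propositions~\ref{bourbaki} and \ref{cover}, I would reformulate the problem: the step $s_{i_{k-1}} \cdots s_{i_1} \lambda \lessdot s_{i_k} \cdots s_{i_1} \lambda$ is a covering relation precisely when $\bigl(s_{i_{k-1}} \cdots s_{i_1} \lambda,\, \alpha_{i_k}^\vee\bigr) = 1$. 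Call a reduced word \emph{good} if this condition holds at every step, and let $\mathcal{W}$ denote the set of good reduced words for $w$. By hypothesis $\mathcal{W}$ is nonempty, and the goal is to prove $\mathcal{W}$ contains every reduced word for $w$.

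Closure of $\mathcal{W}$ under commutation moves ($m_{ij} = 2$) is straightforward: since $\alpha_i \perp \alpha_j$, we have $(s_j \mu, \alpha_i^\vee) = (\mu, \alpha_i^\vee)$, so the pair of saturation inner products at the swapped positions is symmetric in $i$ and $j$ and survives the swap intact.

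The crux is that no word in $\mathcal{W}$ contains an alternating subword of length $m_{ij} \geq 3$ on which a braid move could operate. Suppose for contradiction that $\mathbf{i} \in \mathcal{W}$ contains such a subword $(\ldots, j, i, j, i)$ read right-to-left, and let $\mu$ denote the partial product applied to $\lambda$ just before the subword. Saturation at the first two internal steps forces $(\mu, \alpha_i^\vee) = 1$ and $(s_i \mu, \alpha_j^\vee) = 1$. In the simply laced case ($m_{ij} = 3$, $(\alpha_j, \alpha_i^\vee) = -1$), one then computes
\[
(s_j s_i \mu, \alpha_i^\vee) = (\mu - \alpha_i - \alpha_j, \alpha_i^\vee) = 1 - 2 - (-1) = 0,
\]
which contradicts saturation at the third step. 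For $m_{ij} \in \{4, 6\}$, the Bourbaki bound $(\mu, \alpha_k^\vee) \in \{-1, 0, 1\}$ is violated even earlier, because $(\alpha_j, \alpha_i^\vee) \in \{-2, -3\}$ forces the second-step inner product out of range.

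Assembling the pieces, any Tits move starting from a word in $\mathcal{W}$ is necessarily a commutation, which preserves $\mathcal{W}$; hence the entire $\mathcal{W}$-component of the Tits graph lies in $\mathcal{W}$. If some reduced word for $w$ lay outside $\mathcal{W}$, then by Matsumoto--Tits one could transit between $\mathcal{W}$ and its complement via a single braid move at a good word, contradicting the subword exclusion just proved. The main obstacle is the braid-exclusion case analysis: one must verify uniformly that whichever Cartan-matrix entries arise for $m_{ij} \geq 3$, the stacked saturation conditions collide with Proposition~\ref{bourbaki}. Once that is pinned down, Tits-graph connectedness delivers the result.
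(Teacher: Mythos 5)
The paper does not actually prove Proposition~\ref{everyred}; it is listed among the background facts (2.10--2.18) deferred to Proctor and Stembridge, so your self-contained argument --- Tits-graph connectivity plus exclusion of non-commutation braid windows via Proposition~\ref{bourbaki} --- is a legitimate route, and it is essentially the full-commutativity argument of Stembridge that the citation covers. Your reformulation of saturation as $(s_{i_{k-1}}\cdots s_{i_1}\lambda, \alpha_{i_k}^{\vee})=1$ at every step, the closure of good words under commutation moves, the $m_{ij}=3$ computation, and the final connectivity argument are all correct.

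The genuine flaw is your one-line dismissal of $m_{ij}=4$ (the case $m_{ij}=6$ is vacuous, since $G_2$ has no minuscule weight, but $m_{ij}=4$ is not: types $B_n$ and $C_n$ do carry minuscule representations). You claim that $(\alpha_j,\alpha_i^{\vee})\in\{-2,-3\}$ forces the second-step inner product out of the Bourbaki range, but it does not: with $(\mu,\alpha_i^{\vee})=1$ and, say, $(\alpha_i,\alpha_j^{\vee})=-2$, the second step gives $(s_i\mu,\alpha_j^{\vee})=(\mu,\alpha_j^{\vee})+2$, and goodness merely pins $(\mu,\alpha_j^{\vee})=-1$, which is perfectly admissible. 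The contradiction exists but arrives later and depends on which root is long: if $(\alpha_i,\alpha_j^{\vee})=-2$, $(\alpha_j,\alpha_i^{\vee})=-1$, then the third step yields $(s_js_i\mu,\alpha_i^{\vee})=1-2+1=0$; if instead $(\alpha_i,\alpha_j^{\vee})=-1$, $(\alpha_j,\alpha_i^{\vee})=-2$, the third step is fine ($1-2+2=1$) and the failure occurs only at the fourth step, where $(s_is_js_i\mu,\alpha_j^{\vee})=(\mu,\alpha_j^{\vee})+2-2=0\neq 1$ using $(\mu,\alpha_j^{\vee})=0$ forced at step two. So the exclusion of length-$4$ alternating windows in a good word is true, but only after carrying the computation through the whole window; as written, the step you rely on would fail, and you must replace it with this two-case calculation (or restrict attention to the simply laced setting, which is all Theorem~\ref{main} of the paper uses, and say so explicitly).
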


\begin{prop} \label{everysat}
Let $\mu \in \Lambda_{\lambda}$.  Then there exists a unique $\lambda$-minuscule element $w \in W$ such that $w \lambda = \mu$.  Furthermore, if \[\lambda \lessdot \lambda - \alpha_{i_1} \lessdot \cdots \lessdot \lambda - \alpha_{i_1} - \alpha_{i_2} - \cdots - \alpha_{i_{\ell}} = \mu\] is a saturated chain in $\Lambda_{\lambda}$, then $s_{i_{\ell}} s_{i_{\ell-1}} \cdots s_{i_1}$ is a reduced word for $w$.  
\end{prop}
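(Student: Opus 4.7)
The plan is to extract both assertions from a single length inequality, after constructing $w$ from the lattice structure of $\Lambda_{\lambda}$. Since $\lambda$ is the minimum of $\Lambda_{\lambda}$ (we opted for the opposite of the root order), there is a saturated chain $\lambda = \mu_0 \lessdot \mu_1 \lessdot \cdots \lessdot \mu_{\ell} = \mu$, and Proposition~\ref{cover} identifies each cover $\mu_{k-1} \lessdot \mu_k = \mu_{k-1} - \alpha_{i_k}$ with the reflection $s_{i_k}$. Setting $w := s_{i_{\ell}} \cdots s_{i_1}$ yields $w\lambda = \mu$, and once we show this word is reduced, $w$ will be the desired $\lambda$-minuscule element.

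The heart of the argument will be the length inequality $\ell(v) \geq \mathrm{ht}(\lambda - \mu)$ for any $v \in W$ with $v\lambda = \mu$, where $\mathrm{ht}$ denotes the sum of coefficients in the simple root basis (equivalently, the rank of $\mu$ in $\Lambda_{\lambda}$, since every saturated chain from $\lambda$ has this common length). To see it, take any reduced word $v = s_{j_m} \cdots s_{j_1}$ with $m = \ell(v)$, and examine the sequence $\lambda, s_{j_1}\lambda, s_{j_2}s_{j_1}\lambda, \ldots, v\lambda$. By Proposition~\ref{bourbaki}, each step sends $\nu$ to $\nu - (\nu, \alpha_{j_k}^{\vee})\alpha_{j_k}$ with $(\nu, \alpha_{j_k}^{\vee}) \in \{-1, 0, 1\}$, so consecutive heights of $\lambda - \nu$ differ by at most $1$; telescoping gives $m \geq \mathrm{ht}(\lambda - \mu)$.

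The second assertion of the proposition is then immediate: the word $s_{i_{\ell}} \cdots s_{i_1}$ produced by any saturated chain has $\ell = \mathrm{ht}(\lambda - \mu)$ letters and represents some $w$ with $\ell(w) \leq \ell$, but the inequality forces $\ell(w) \geq \ell$, so the word is reduced. Uniqueness of $w$ will follow from a coset argument: if $w_1, w_2$ are both $\lambda$-minuscule with $w_1 \lambda = w_2 \lambda = \mu$, then both have length $\mathrm{ht}(\lambda - \mu)$ and lie in the same coset of the stabilizer $W_{\lambda}$, so both are minimum-length representatives of that coset, and uniqueness of such a representative gives $w_1 = w_2$.

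The main obstacle is the length inequality, which depends critically on the minuscule hypothesis via Proposition~\ref{bourbaki}: if the pairings $(\nu, \alpha_i^{\vee})$ could exceed $1$ in absolute value, simple reflections could shift weights by more than one simple root at a time and the telescoping bound would fail. Everything else -- existence, reducedness, uniqueness -- reduces to routine bookkeeping once this inequality is in hand.
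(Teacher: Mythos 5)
The paper itself offers no proof of this proposition: it is one of the facts in 2.10--2.18 that the author defers wholesale to Proctor and Stembridge, so the comparison here is with the standard arguments in that literature rather than with anything in the text. Your plan essentially reconstructs that standard argument, and its core is sound: the telescoping bound $\ell(v)\geq \operatorname{ht}(\lambda-\mu)$ via Proposition~\ref{bourbaki} is exactly the right mechanism, it immediately forces any word read off a simple-root chain to be reduced (hence $\lambda$-minuscule), and uniqueness via minimal-length coset representatives of the stabilizer is the usual route. Two points need shoring up, one of them a real (if small) gap.

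First, your existence step takes an arbitrary saturated chain $\lambda=\mu_0\lessdot\cdots\lessdot\mu_\ell=\mu$ in $\Lambda_\lambda$ and asserts that each cover has the form $\mu_{k-1}\lessdot\mu_{k-1}-\alpha_{i_k}$, so that Proposition~\ref{cover} applies. But Proposition~\ref{cover} only treats covers already known to be of that form; since $\Lambda_\lambda$ is a restriction of the (opposite) root order to the weights of $V^\lambda$, it is not automatic that its covering relations are single simple-root steps. You either need to prove this, or (easier) construct a chain of the required form directly: if $\nu\in\Lambda_\lambda$ and $\nu\neq\lambda$, then $\nu$ is not dominant, so $(\nu,\alpha_i^\vee)=-1$ for some $i$ by Proposition~\ref{bourbaki}, whence $s_i\nu=\nu+\alpha_i\in\Lambda_\lambda$; iterating from $\mu$ climbs to $\lambda$ by simple roots, and consecutive weights differing by a simple root are covers. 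Second, your uniqueness argument invokes ``uniqueness of the minimum-length representative'' of the coset $wW_\lambda$; this is valid only because the stabilizer of the dominant weight $\lambda$ is the standard parabolic subgroup generated by $\{s_i:(\lambda,\alpha_i^\vee)=0\}$, for which unique minimal-length coset representatives exist and satisfy $\ell(w^J u)=\ell(w^J)+\ell(u)$. That fact is standard but should be stated, since for an arbitrary subgroup the conclusion fails. With these two points supplied, your argument is complete and self-contained, which is more than the paper attempts.
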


\begin{df} \label{heap}
Let $w \in W$ be $\lambda$-minuscule, and let $w = s_{i_{\ell}} s_{i_{\ell-1}} \cdots s_{i_1}$ be a reduced word.  The \textit{heap} $P_{\lambda, (i_1, i_2, \ldots, i_{\ell})}$ associated to $s_{i_{\ell}} s_{i_{\ell -1}} \cdots s_{i_1}$ is the labeled set $\lbrace 1, 2, \ldots, \ell \rbrace$, where $i_j$ is the \textit{label} of the element $j$ for all $1 \leq j \leq \ell$, equipped with the partial order arising as the transitive closure of the relations $j < j'$ for all $1 \leq j < j' \leq \ell$ for which $s_{i_j}$ and $s_{i_{j'}}$ do not commute.  
\end{df}

\begin{prop} \label{linext}
Let $w \in W$ be $\lambda$-minuscule, and let $w = s_{i_{\ell}} s_{i_{\ell-1}} \cdots s_{i_1}$ be a reduced word.  Let $\mathcal{L}\left(P_{\lambda, (i_1, i_2, \ldots, i_{\ell})}\right) := \lbrace \mathcal{A} : \mathcal{A}_1 \lessdot \mathcal{A}_2 \lessdot \cdots \lessdot \mathcal{A}_{\ell} \rbrace$ be the set of linear extensions of the heap $P_{\lambda, (i_1, i_2, \ldots, i_{\ell})}$.  For all $\mathcal{A} \in \mathcal{L}$, let $s(A) := s_{i_{\mathcal{A}_{\ell}}} s_{i_{\mathcal{A}_{\ell-1}}} \cdots s_{i_{\mathcal{A}_1}}$.  Then $\lbrace s(A) \rbrace_{\mathcal{A} \in \mathcal{L}\left(P_{\lambda, (i_1, i_2, \ldots, i_{\ell})}\right)}$ is the set of reduced words for $w$ in $W$.  
\end{prop}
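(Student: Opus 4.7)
The plan is to prove the equality of the two sets by showing each containment. The identity extension $\mathcal{A}_k = k$ produces the given reduced word $s_{i_\ell} \cdots s_{i_1}$, so the forward direction --- every $s(\mathcal{A})$ is a reduced word for $w$ --- is established once one shows that any two linear extensions are connected by adjacent transpositions swapping incomparable heap elements, and that each such swap corresponds to a commutation $s_{i_{\mathcal{A}_k}} s_{i_{\mathcal{A}_{k+1}}} \leftrightarrow s_{i_{\mathcal{A}_{k+1}}} s_{i_{\mathcal{A}_k}}$ in the associated product. The first point is a standard fact about linear extensions of finite posets; the second is the very content of Definition~\ref{heap}, which declares incomparable heap elements to carry labels indexing commuting simple reflections.

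The backward direction --- every reduced word for $w$ arises as $s(\mathcal{A})$ --- rests on the \emph{full commutativity} of $\lambda$-minuscule elements: any two reduced expressions for $w$ can be interconverted using only length-two braid relations $s_i s_j = s_j s_i$, never invoking a longer braid relation. Given this, an arbitrary reduced expression for $w$ is related to the prescribed one by a sequence of commutations, which pulls back to a sequence of adjacent swaps of incomparable heap elements, exhibiting the expression as $s(\mathcal{A})$ for the resulting linear extension $\mathcal{A}$.

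To establish full commutativity, I would argue by contradiction. Suppose some reduced expression for $w$ contains a consecutive subword $s_i s_j s_i$ with $m(i,j) \geq 3$, and decompose $w = u \cdot (s_i s_j s_i) \cdot v$ with lengths additive. By Proposition~\ref{everyred}, the resulting chain in $\Lambda_\lambda$ is saturated, so writing $\mu := v\lambda$, Proposition~\ref{cover} forces
\[ (\mu, \alpha_i^\vee) = (\mu - \alpha_i, \alpha_j^\vee) = (\mu - \alpha_i - \alpha_j, \alpha_i^\vee) = 1. \]
Expanding the third equation yields $(\alpha_j, \alpha_i^\vee) = -2$, which contradicts $(\alpha_j, \alpha_i^\vee) = -1$ in the simply-laced setting. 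The chief obstacle is extending this contradiction uniformly to the doubly-laced types admitting minuscule weights, where $m(i,j) = 4$ permits $(\alpha_j, \alpha_i^\vee) = -2$ a priori; here one must invoke the Bourbaki restriction of Proposition~\ref{bourbaki} to preclude the requisite weight configurations along the chain, or alternatively cite Stembridge~\cite{Stembridge2} for a uniform statement of the full commutativity of $\lambda$-minuscule elements.
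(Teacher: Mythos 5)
The paper gives no proof of this proposition at all: it is one of the background facts (2.10--2.18) attributed wholesale to Proctor \cite{Proctor} and Stembridge \cite{Stembridge}, \cite{Stembridge2}, so there is no internal argument to compare against. Your outline is essentially the standard argument of \cite{Stembridge}: the forward containment follows because any two linear extensions of a finite poset are linked by adjacent transpositions of incomparable elements, and incomparable elements of the heap carry commuting labels by Definition~\ref{heap}; the reverse containment follows from full commutativity of $\lambda$-minuscule elements, which lets any reduced word be reached from the given one by commutations alone, and these commutations transport back to swaps preserving the linear-extension property. This is correct, and note that the reverse step can be discharged simply by invoking Proposition~\ref{fullcomm}, which the paper states as a standing fact; you do not need to reprove it.

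The only soft spot is your in-house proof of full commutativity in the doubly laced case. Your length-three computation is right when $m(i,j)=3$: the three cover conditions force $(\alpha_j, \alpha_i^{\vee}) = -2$, which is impossible in the simply laced setting. But when $m(i,j)=4$ the forbidden pattern is the alternating substring of length four, and the tool you reach for, Proposition~\ref{bourbaki}, is not the relevant one --- the quantity $(\alpha_j,\alpha_i^{\vee})=-2$ concerns roots, not weights of $V^{\lambda}$, so the Bourbaki bound says nothing about it. Instead, just push the same computation one step further: with $(\alpha_j,\alpha_i^{\vee})=-2$ (forced by the third cover) and hence $(\alpha_i,\alpha_j^{\vee})=-1$, the second cover gives $(\mu,\alpha_j^{\vee})=0$, whereas the fourth cover requires $(s_is_js_i\mu,\alpha_j^{\vee}) = (\mu-2\alpha_i-\alpha_j,\alpha_j^{\vee}) = 0+2-2 = 0 \neq 1$, contradicting Proposition~\ref{cover}; the case with the roles of $i$ and $j$ exchanged is identical after relabeling. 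With that one extra step your argument is uniform across all types admitting minuscule weights; alternatively, citing \cite{Stembridge2}, as you propose and as the paper itself does, is perfectly legitimate.
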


\begin{prop} \label{totorder}
Let $w \in W$ be $\lambda$-minuscule, and let $w = s_{i_{\ell}} s_{i_{\ell-1}} \cdots s_{i_1}$ be a reduced word.  In the heap $P_{\lambda, (i_1, i_2, \ldots, i_{\ell})}$, if $i_j = i_k$, then $j < k$ or $k < j$.  
\end{prop}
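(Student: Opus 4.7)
The plan is to prove this by induction on $k-j$, assuming without loss of generality that $j < k$. I would first note that every generating relation of the heap order goes forward in the position index, so comparability of $j$ and $k$ in $P := P_{\lambda,(i_1,\ldots,i_{\ell})}$ is equivalent to the heap-relation $j <_P k$.

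The inductive step branches on the labels strictly between $j$ and $k$. If some position $m$ with $j < m < k$ has $s_{i_m}$ not commuting with $s_{i_j}$ (equivalently, with $s_{i_k}$), then the defining generating relations of the heap yield $j <_P m$ and $m <_P k$, and transitivity gives $j <_P k$. Otherwise, every $s_{i_m}$ for $j < m < k$ commutes with $s_{i_j}$. Within this scenario, if some intermediate position $m$ carries the same label $i := i_j = i_k$, the inductive hypothesis applies to the pairs $(j,m)$ and $(m,k)$ (both of smaller position-difference), giving $j <_P m <_P k$.

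The only remaining case to rule out is that no intermediate position is labeled $i$ and every intermediate reflection commutes with $s_i$. Here I would invoke reducedness: the commutation relations in the Coxeter group allow $s_{i_k}$ to be shuttled leftward past $s_{i_{k-1}}, s_{i_{k-2}}, \ldots, s_{i_{j+1}}$ (a valid sequence of commutation moves, since each of these reflections commutes with $s_{i_k}=s_i$), producing an expression for $w$ in which the consecutive subword $s_{i_k}s_{i_j}=s_is_i$ collapses to the identity. The resulting shorter expression for $w$ contradicts the hypothesis that $s_{i_{\ell}}s_{i_{\ell-1}}\cdots s_{i_1}$ was reduced.

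I expect the main obstacle to be merely packaging the commutation argument precisely; conceptually, reducedness is doing all the work, but some care is needed to justify that the intermediate commutations can be performed in sequence and that the outcome is genuinely a shorter expression. Otherwise the argument is an elementary induction that does not actually use the $\lambda$-minuscule hypothesis beyond the fact that the reduced word for $w$ exists, reflecting that Proposition~\ref{totorder} is really a statement about heaps of reduced words in arbitrary Coxeter groups.
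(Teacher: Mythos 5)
Your argument is correct, and it is genuinely self-contained, which the paper's treatment is not: the paper offers no proof of Proposition~\ref{totorder} at all, instead deferring (along with the rest of 2.10--2.18) to the structural results of Proctor and Stembridge on minuscule posets and fully commutative elements. Your induction on $k-j$ is the standard direct argument, and the case analysis is complete: a non-commuting intermediate letter gives $j <_P m <_P k$ by the generating relations; an intermediate occurrence of the same label $i$ is handled by the inductive hypothesis together with your (correct) observation that the heap order refines the positional order, so comparability forces $j <_P m <_P k$; and if neither occurs, commutation moves bring the two copies of $s_i$ adjacent and cancel them, contradicting reducedness --- this also covers the base case $k-j=1$ vacuously. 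One cosmetic slip: with the paper's convention $w = s_{i_\ell}\cdots s_{i_1}$, position $k$ sits to the \emph{left} of position $j$ in the written word, so you are shuttling $s_{i_k}$ rightward (toward lower positions), not leftward; the substance of the commutation argument is unaffected. Your closing remark is also apt: the statement needs only that the word is reduced, so it holds for heaps of reduced words in arbitrary Coxeter groups, with the $\lambda$-minuscule hypothesis playing no role --- a gain in generality and transparency over the paper's citation-based route, which in exchange buys brevity and consistency with the heap machinery it imports wholesale.
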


\begin{prop} \label{fullcomm}
Let $w \in W$ be $\lambda$-minuscule, and let $s_{i_{\ell}} s_{i_{\ell-1}} \cdots s_{i_1}$ and $s_{i'_{\ell}} s_{i'_{\ell-1}} \cdots s_{i'_1}$ be two reduced words for $w$.  Then there exists a sequence of commuting braid relations (viz., relations of the form $s_p s_q = s_q s_p$ for commuting simple reflections $s_p, s_q$) exchanging $s_{i_{\ell}} s_{i_{\ell-1}} \cdots s_{i_1}$ and $s_{i'_{\ell}} s_{i'_{\ell-1}} \cdots s_{i'_1}$.  
\end{prop}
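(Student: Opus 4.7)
The plan is to reduce the statement to a standard fact about linear extensions of posets via the bijection of Proposition~\ref{linext}. By that proposition, the reduced words $s_{i_\ell} s_{i_{\ell-1}} \cdots s_{i_1}$ and $s_{i'_\ell} s_{i'_{\ell-1}} \cdots s_{i'_1}$ correspond to two linear extensions $\mathcal{A}, \mathcal{A}'$ of the heap $P := P_{\lambda, (i_1, i_2, \ldots, i_\ell)}$. So it suffices to show that $\mathcal{A}$ and $\mathcal{A}'$ can be connected by a sequence of adjacent transpositions, each of which swaps two elements whose labels correspond to commuting simple reflections.

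First I would invoke the standard fact that any two linear extensions of a finite poset differ by a sequence of adjacent transpositions of incomparable elements (an easy induction on the number of inversions between the two extensions). Applying this to $\mathcal{A}$ and $\mathcal{A}'$ yields a sequence $\mathcal{A} = \mathcal{A}^{(0)}, \mathcal{A}^{(1)}, \ldots, \mathcal{A}^{(N)} = \mathcal{A}'$ of linear extensions of $P$ such that consecutive terms differ by the transposition of two adjacent entries that are incomparable in $P$.

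Next I would verify that each such transposition corresponds to a commuting braid relation on the associated reduced word. Suppose $\mathcal{A}^{(k+1)}$ is obtained from $\mathcal{A}^{(k)}$ by swapping the entries in positions $m$ and $m+1$, call them $j$ and $j'$ (with $j < j'$ as integers, say). By Definition~\ref{heap}, the partial order on $P$ is the transitive closure of the relations $j < j'$ for which $s_{i_j}$ and $s_{i_{j'}}$ do not commute. Since $j$ and $j'$ are incomparable in $P$, the generating relation $j < j'$ is absent, which forces $s_{i_j}$ and $s_{i_{j'}}$ to commute. Therefore the passage from $s(\mathcal{A}^{(k)})$ to $s(\mathcal{A}^{(k+1)})$ is exactly a commuting braid relation $s_{i_j} s_{i_{j'}} = s_{i_{j'}} s_{i_j}$ applied in positions $m, m+1$.

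Concatenating these $N$ commuting braid moves produces a sequence converting $s_{i_\ell} s_{i_{\ell-1}} \cdots s_{i_1}$ into $s_{i'_\ell} s_{i'_{\ell-1}} \cdots s_{i'_1}$ as required. The only nontrivial step is the observation about incomparable elements having commuting labels, and this is immediate from the definition of the heap; the rest is bookkeeping on linear extensions.
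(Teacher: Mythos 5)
Your deduction is internally sound: given Proposition~\ref{linext}, the second reduced word is the reading word $s(\mathcal{A}')$ of some linear extension $\mathcal{A}'$ of the heap $P_{\lambda,(i_1,\ldots,i_\ell)}$, any two linear extensions of a finite poset are linked by adjacent transpositions of incomparable elements, and incomparable heap elements carry commuting labels (if $s_{i_j}$ and $s_{i_{j'}}$ failed to commute, the generating relation in Definition~\ref{heap} would make $j$ and $j'$ comparable). All of that is correct bookkeeping.

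The problem is what you take as input. Proposition~\ref{linext} asserts that \emph{every} reduced word for $w$ arises as a linear-extension reading of the heap built from one fixed reduced word. Since the linear-extension readings of a heap are exactly the commutation class of the underlying word (your own argument gives one direction, and the converse --- that a commutation move on a reading word again yields a linear extension --- is just as easy), Proposition~\ref{linext} is essentially a restatement of Proposition~\ref{fullcomm}: both say that the reduced words of $w$ form a single commutation class. So your proof begs the question unless Proposition~\ref{linext} has been established independently, and in the sources the paper cites for this block of facts (Stembridge's papers on fully commutative and minuscule elements) the logical order is the reverse of yours: full commutativity of $\lambda$-minuscule elements is proved first, by a direct analysis of reduced words (pattern avoidance), and heap statements like Proposition~\ref{linext} are then consequences. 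The paper itself offers no proof of Proposition~\ref{fullcomm} beyond this citation, so the genuine content --- Stembridge's theorem that $\lambda$-minuscule elements are fully commutative --- is not supplied by your argument; to make your route non-circular you would need an independent proof that every reduced word of $w$ is a linear-extension reading of the fixed heap.
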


\begin{prop} \label{heapiso}
Let $w \in W$ be $\lambda$-minuscule, and let $s_{i_{\ell}} s_{i_{\ell-1}} \cdots s_{i_1}$ and $s_{i'_{\ell}} s_{i'_{\ell-1}} \cdots s_{i'_1}$ be two reduced words for $w$.  Then there exists a unique permutation $\sigma \in \mathfrak{S}_{\ell}$ such that $\sigma \colon \lbrace 1, 2, \ldots, \ell \rbrace \rightarrow \lbrace 1, 2, \ldots, \ell \rbrace$ defines an isomorphism of heaps $P_{\lambda, (i_1, i_2, \ldots, i_{\ell})} \xrightarrow{\sim} P_{\lambda, (i'_1, i'_2, \ldots, i'_{\ell})}$.  
\end{prop}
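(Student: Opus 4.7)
The plan is to leverage Proposition~\ref{fullcomm}, which guarantees that the two reduced words $s_{i_{\ell}} \cdots s_{i_1}$ and $s_{i'_{\ell}} \cdots s_{i'_1}$ for $w$ are connected by a finite sequence of commuting braid relations. Since a composition of heap isomorphisms is again a heap isomorphism, it suffices to establish the proposition in the case that the two reduced words differ by a single commuting braid move --- exchanging $s_{i_j} s_{i_{j+1}}$ with $s_{i_{j+1}} s_{i_j}$ at adjacent positions $j, j+1$ where $s_{i_j}$ and $s_{i_{j+1}}$ commute. The permutation $\sigma$ in the general case then arises as the composition of the permutations obtained at each intermediate step.

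In this base case I would take $\sigma$ to be the adjacent transposition $(j, j+1)$ extended by the identity on the remaining positions. Label-preservation is immediate from the construction. For order-preservation, observe first that in both the source and target heaps, $j$ and $j+1$ are incomparable: by Definition~\ref{heap}, there is no generating relation between them because the corresponding simple reflections commute, and no integer lies strictly between $j$ and $j+1$ for a transitive relation to pass through. For any third position $k \neq j, j+1$, the label $i_k = i'_k$ is unchanged, and the integer order of $k$ relative to $\lbrace j, j+1 \rbrace$ is preserved by $\sigma$, so every generating relation involving $k$ carries over intact.

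For uniqueness, suppose $\sigma$ and $\sigma'$ both effect the isomorphism. Each must preserve labels. By Proposition~\ref{totorder}, the set of positions in each heap carrying any fixed label is totally ordered, so a label-preserving heap isomorphism must match the $k$-th occurrence of each label in the source with the $k$-th occurrence in the target. This prescription determines the permutation completely, so $\sigma = \sigma'$. The main obstacle I anticipate is ensuring that the elementary swap respects the full transitive closure of the heap order, not merely its generating relations --- but this reduces to the transparent observation that no integer lies strictly between $j$ and $j+1$, blocking any transitive route between them via an intermediate element.
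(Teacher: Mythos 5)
Your proposal is correct: the elementary-swap analysis gives existence via Proposition~\ref{fullcomm}, and the uniqueness argument via the total ordering of each label class (Proposition~\ref{totorder}) is sound. The paper itself offers no proof of this proposition --- it delegates claims 2.10--2.18 to Proctor and Stembridge --- but your route through commutation moves and label-fiber chains is exactly the standard argument those references supply, so there is nothing to add beyond the minor observation that each intermediate word in the commutation sequence is again a reduced word for the same $\lambda$-minuscule $w$, so the intermediate heaps are defined.
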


\begin{thm} \label{structure}
Let $\mu \in \Lambda_{\lambda}$.  Given an order ideal $I \in J(P_{\lambda, \mu})$, let $\mathcal{A}_I$ be a linear extension of $I$, and set $\phi(I) := s(\mathcal{A}_I) \lambda$.  Then $\phi$ defines an isomorphism of posets $J(P_{\lambda, \mu}) \xrightarrow{\sim} \Lambda_{\lambda}^{\mu}$.  
\end{thm}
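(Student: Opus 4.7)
The plan is to verify in sequence that $\phi$ is well-defined, that it is an order-preserving bijection, and that its inverse is also order-preserving. I would begin by fixing, via Proposition~\ref{everysat}, a reduced word $w = s_{i_{\ell}} s_{i_{\ell-1}} \cdots s_{i_1}$ for the $\lambda$-minuscule element $w \in W$ with $w \lambda = \mu$, so that $P_{\lambda, \mu}$ is realized as $P_{\lambda, (i_1, \ldots, i_{\ell})}$.

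For well-definedness, given an order ideal $I \in J(P_{\lambda, \mu})$ with linear extension $\mathcal{A}_I = (a_1, \ldots, a_k)$, I would concatenate $\mathcal{A}_I$ with any linear extension of the complement (a valid prescription because $I$ is down-closed) to obtain a full linear extension $\mathcal{A}$ of $P_{\lambda, \mu}$. Proposition~\ref{linext} makes $s(\mathcal{A})$ a reduced word for $w$, so its prefix $s(\mathcal{A}_I) = s_{i_{a_k}} \cdots s_{i_{a_1}}$ is reduced; Proposition~\ref{everyred} then provides the saturated chain $\lambda \lessdot s_{i_{a_1}} \lambda \lessdot \cdots \lessdot s(\mathcal{A}) \lambda$, whose truncation at position $k$ exhibits $s(\mathcal{A}_I)$ as a reduced word for a $\lambda$-minuscule element $w_I \in W$. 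To see that $w_I$ depends only on $I$, I would identify the labeled subposet $I \subseteq P_{\lambda, \mu}$ with the heap $P_{\lambda, (i_{a_1}, \ldots, i_{a_k})}$ of Definition~\ref{heap}, and then invoke Proposition~\ref{linext} applied to $w_I$: every linear extension of $I$ yields a reduced word for $w_I$, and any two such words act identically on $\lambda$.

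For order-preservation and bijectivity, I would first note that if $I' = I \cup \{p\}$ is a cover in $J(P_{\lambda, \mu})$, then appending $p$ to any linear extension of $I$ exhibits $\phi(I') = s_{i_p} \phi(I)$ with $\phi(I) \lessdot \phi(I')$ by the saturated chain above, so $\phi$ carries covers to covers. For surjectivity, given $\mu' \in \Lambda_{\lambda}^{\mu}$, Proposition~\ref{everysat} supplies a $\lambda$-minuscule $w_{\mu'}$ with $w_{\mu'} \lambda = \mu'$, and concatenating a saturated chain from $\lambda$ up to $\mu'$ with one from $\mu'$ up to $\mu$ produces a reduced word $s_{j_{\ell}} \cdots s_{j_1}$ for $w$ whose initial segment is a reduced word for $w_{\mu'}$; Proposition~\ref{heapiso} then transports the order ideal $\{1, \ldots, k\}$ of $P_{\lambda, (j_1, \ldots, j_{\ell})}$ to the desired $I \in J(P_{\lambda, \mu})$ with $\phi(I) = \mu'$. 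Injectivity follows because $\phi(I) = \phi(I')$ forces $w_I = w_{I'}$ by the uniqueness clause of Proposition~\ref{everysat}, after which the uniqueness in Proposition~\ref{heapiso} (applied to reduced-word extensions of both linear extensions) compels $I = I'$.

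The main obstacle is the self-referential identification in the well-definedness step: verifying that $I$, with its induced labeling, is the heap of $w_I$ in the precise sense of Definition~\ref{heap}. The partial order on $I$ inherited from $P_{\lambda, \mu}$ is generated by non-commutations within the ambient word $(i_1, \ldots, i_{\ell})$, whereas the heap of $w_I$ is generated by non-commutations within the shorter word $(i_{a_1}, \ldots, i_{a_k})$; their agreement reduces to the observation that any non-commutation chain in $(i_1, \ldots, i_{\ell})$ connecting two elements of $I$ may be taken to lie entirely in $I$, since each intermediate index sits below an element of $I$ in the heap order and $I$ is down-closed.
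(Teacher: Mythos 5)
Your route is essentially the one the paper itself points to: the paper gives no proof of Theorem~\ref{structure}, deferring to Proctor, Stembridge, and Rush--Wang, and noting that the clean minuscule-specific argument goes ``via Proposition~\ref{heapiso}.'' Your well-definedness and surjectivity steps are sound and use the right tools: extending a linear extension of $I$ to one of $P_{\lambda,\mu}$, invoking Propositions~\ref{linext} and~\ref{everyred} to get a reduced word and saturated chain, Proposition~\ref{everysat} for the $\lambda$-minuscule element over $\mu'$, and Proposition~\ref{heapiso} to transport the initial segment $\lbrace 1, \ldots, k \rbrace$. The subtlety you flag at the end --- that $I$ with its induced order and labels really is the heap of $w_I$ --- is the right thing to worry about, and your ``chains stay in $I$'' observation handles the harder containment; note you also need the easy converse, that if positions $r < r'$ of $\mathcal{A}_I$ carry non-commuting labels then $a_r$ lies below $a_{r'}$ in the ambient heap, which uses that $\mathcal{A}_I$ is a linear extension of the induced order.

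There are, however, two genuine gaps. First, injectivity: saying that the uniqueness in Proposition~\ref{heapiso}, applied to reduced-word extensions of the two linear extensions, ``compels $I = I'$'' is a non sequitur as written. The unique isomorphism between the heaps of the two extended words exists whether or not $I = I'$; what must be shown is that it carries the initial segment $\lbrace 1, \ldots, k \rbrace$ onto $\lbrace 1, \ldots, k \rbrace$, whence $I = I'$. This requires Proposition~\ref{totorder} (same-labeled elements are totally ordered, so any label-preserving isomorphism must match the $m$-th occurrence of each label) together with Proposition~\ref{fullcomm} (all reduced words of the $\lambda$-minuscule element $w_I = w_{I'}$ are related by commuting braid moves, hence have the same letter content), so that for each label the two prefixes contain equally many occurrences and these are the lowest ones in their heaps; neither proposition is cited in your argument. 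Second, you announce in your plan that the inverse of $\phi$ is order-preserving but never establish it, and a bijection carrying covers to covers is not automatically a poset isomorphism. This can be repaired from your own surjectivity construction --- running it along a saturated chain of $\Lambda_{\lambda}^{\mu}$ through a given cover $\nu \lessdot \nu'$ produces nested order ideals mapping to $\nu$ and $\nu'$, which injectivity identifies with the preimages, giving $\phi^{-1}(\nu) \subseteq \phi^{-1}(\nu')$ --- but the step has to be carried out, not merely promised.
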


\begin{df}
Let $V$ be a minuscule representation of $\mathfrak{g}$ with minuscule weight $\lambda$.  The heap $P_{\lambda, w_0 \lambda}$, which we denote by $P_{\lambda}$, is the \textit{minuscule heap} for $V$.  
\end{df}

\begin{thm}[Rush--Shi \cite{Rush}, Theorem 6.3] \label{rushshi}
Let $V$ be a minuscule representation of $\mathfrak{g}$ with minuscule weight $\lambda$, and let $P_{\lambda}$ be the minuscule heap for $V$.  For all $1 \leq i \leq t$, let $P_{\lambda}^i \subset P_{\lambda}$ be the set of elements of $P_{\lambda}$ labeled by $i$, and let $t_i := \prod_{p \in P_{\lambda}^i} t_p$.  Then the following diagram is commutative.  
\[\renewcommand{\arraystretch}{1.0}
\begin{array}[c]{ccc}
J(P_{\lambda}) & \stackrel{\phi}{\rightarrow} & \Lambda_{\lambda} \\
\downarrow \scriptstyle{t_i} && \downarrow \scriptstyle{s_i} \\
J(P_{\lambda}) & \stackrel{\phi}{\rightarrow} & \Lambda_{\lambda}
\end{array}
\]
\end{thm}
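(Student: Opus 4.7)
The plan is to compute both sides of the diagram explicitly. Starting from the formula
\[\phi(I) = \lambda - \sum_{p \in I} \alpha_{\operatorname{label}(p)},\]
obtained by iterating Proposition~\ref{cover} along any linear extension of $I$, we get
\[(\phi(I), \alpha_i^\vee) = (\lambda, \alpha_i^\vee) - \sum_{p \in I} (\alpha_{\operatorname{label}(p)}, \alpha_i^\vee),\]
which Proposition~\ref{bourbaki} forces to lie in $\{-1, 0, 1\}$. Hence $s_i \phi(I) \in \{\phi(I),\, \phi(I) - \alpha_i,\, \phi(I) + \alpha_i\}$, and via the bijection $\phi$ of Theorem~\ref{structure}, the order ideal $\phi^{-1}(s_i \phi(I))$ differs from $I$ by at most one element, necessarily of label $i$. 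Call this element $p^\ast$ (or set it to $\varnothing$ if the two ideals coincide). Proving commutativity then reduces to showing $t_i(I)$ also differs from $I$ by exactly $p^\ast$.

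The combinatorial side uses that $P_\lambda^i = \{p_1 < \cdots < p_k\}$ is a chain by Proposition~\ref{totorder}, so $I \cap P_\lambda^i$ is an initial segment $\{p_1, \ldots, p_{j^\ast}\}$. A short case analysis shows at most one element of $P_\lambda^i$ is toggleable relative to $I$: the only candidates are $p_{j^\ast + 1}$ (toggleable in iff its other immediate predecessors in $P_\lambda$ lie in $I$) and $p_{j^\ast}$ (toggleable out iff its other immediate successors lie outside $I$); two simultaneous toggles would force intermediate elements of $P_\lambda$ to be both in and not in $I$, which is absurd. The analogous check applied to the post-toggle state shows no second toggle activates. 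Consequently $t_i = \prod_{p \in P_\lambda^i} t_p$ collapses, independently of multiplication order, to a single toggle at the unique active element when one exists, and to the identity otherwise.

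The main obstacle is matching this active element with $p^\ast$: showing that the sign of $(\phi(I), \alpha_i^\vee)$ correctly predicts whether the active toggle is in or out, and that it picks out the correct element of $P_\lambda^i$. This reduces to unpacking the inner-product formula above via the Cartan matrix (with $a_{ii} = 2$ and $a_{ji}$ concentrated at labels $j$ adjacent to $i$ in the Dynkin diagram) and tracking how $|I \cap P_\lambda^i|$ interacts with the count of label-$j$ elements of $I$ for each $j \sim i$. The fine interleaving of label-$i$ elements with their Dynkin-neighbor labels in $P_\lambda$—the distinctive structural feature of minuscule heaps—is exactly what makes this tracking feasible, and conjoined with Propositions~\ref{totorder} and \ref{bourbaki} it pins down $p^\ast$ as the active element, closing the argument.
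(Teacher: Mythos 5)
Your scaffolding is sound: the formula $\phi(I)=\lambda-\sum_{p\in I}\alpha_{\operatorname{label}(p)}$, the bound $(\phi(I),\alpha_i^{\vee})\in\{-1,0,1\}$ from Proposition~\ref{bourbaki}, the initial-segment description of $I\cap P_{\lambda}^i$, and the check that at most one toggle in $P_{\lambda}^i$ is active and that no second one activates afterwards (note that this last step silently uses that two elements of $P_{\lambda}^i$ are never in a covering relation of $P_{\lambda}$ --- the Remark following the theorem --- since otherwise your ``intermediate elements'' need not exist). The genuine gap is the step you yourself flag as the main obstacle: you never prove that the sign of $(\phi(I),\alpha_i^{\vee})$ matches the unique active toggle, in direction or in location. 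You assert that this ``reduces to'' a Cartan-matrix computation made feasible by the ``fine interleaving'' of label-$i$ elements with their Dynkin-neighbor labels, but that interleaving is itself an unproved structural claim about minuscule heaps --- it is essentially the hard content of Rush--Shi's Theorem 6.3 --- and no computation is actually carried out. As written, you have shown only that $t_i$ and $\phi^{-1}s_i\phi$ each change $I$ by at most one label-$i$ element, not that they change it by the same one; nothing in your text rules out, say, $(\phi(I),\alpha_i^{\vee})=0$ while some $p\in P_{\lambda}^i$ is toggleable. (For calibration: the present paper does not reprove this theorem; it imports it from Rush--Shi \cite{Rush}, so the burden of this step falls entirely on you.)

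The gap is closable with tools you already quote, and more cheaply than via the Cartan matrix: use that $\phi$ is an order isomorphism (Theorem~\ref{structure}), not merely a bijection. If $(\phi(I),\alpha_i^{\vee})=1$, then $s_i\phi(I)=\phi(I)-\alpha_i$ lies in $\Lambda_{\lambda}$ and covers $\phi(I)$ there, so its $\phi$-preimage covers $I$ in $J(P_{\lambda})$, i.e.\ equals $I\cup\{p\}$ for a single element $p$, and your weight formula forces $\operatorname{label}(p)=i$; thus the ``in'' toggle at $p=p_{j^{\ast}+1}$ is active, your uniqueness and no-reactivation analysis gives $t_i(I)=I\cup\{p\}$, and hence $\phi(t_i(I))=s_i\phi(I)$. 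The case $(\phi(I),\alpha_i^{\vee})=-1$ is symmetric, and if $(\phi(I),\alpha_i^{\vee})=0$, an active toggle in either direction would produce an order ideal whose image under $\phi$ pairs with $\alpha_i^{\vee}$ to $\pm 2$, contradicting Proposition~\ref{bourbaki}; hence $t_i(I)=I=\phi^{-1}(s_i\phi(I))$. Supplying this case analysis (or honestly executing your Cartan-matrix bookkeeping, including a proof of the interleaving property you invoke) is what is needed to close the argument.
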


\begin{rem}
From Proposition~\ref{totorder}, we see that $P_{\lambda}^i$ is a totally ordered subset of $P_{\lambda}$ that contains no covering relations in $P_{\lambda}$.  This observation, which explains why the map $t_i$ is well-defined, is implicit not only to Theorem~\ref{rushshi}, but also to our results in the next section.  
\end{rem}

\section{Proof of Theorem 1.7}
In this section, we prove our main theorem.  We begin with two lemmas.  The first is taken from Rush--Wang \cite{Rush2}.  It expresses in terms of inner products the number of elements with a common label in a heap order ideal.  Then the second lemma does the same for combinations of indicators of toggle eligibility.  

\begin{lem}[Rush--Wang \cite{Rush2}, Lemma 3.4] \label{whatisordercard}
Let $V$ be a minuscule representation of $\mathfrak{g}$ with minuscule weight $\lambda$ and minuscule heap $P_{\lambda}$.  For all $1 \leq i \leq t$, let $P_{\lambda}^i \subset P_{\lambda}$ be the set of elements of $P_{\lambda}$ labeled by $i$, and let $f^i \colon J(P_{\lambda}) \rightarrow \mathbb{R}$ be defined by $I \mapsto |I \cap P_{\lambda}^i|$.  Then \[f^i(I) = 2 \frac{(\lambda, \omega_i) - (\phi(I), \omega_i)}{(\alpha_i, \alpha_i)}.\]  
\end{lem}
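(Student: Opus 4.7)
The plan is to trace how $\phi(I)$ evolves as the ideal is built up one element at a time along a linear extension, and then extract $f^i(I)$ by pairing with $\omega_i$.

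Fix a linear extension $(p_1, p_2, \ldots, p_\ell)$ of $I$; let $i_k$ denote the label of $p_k$, and set $I_k := \{p_1, \ldots, p_k\}$ with $I_0 = \varnothing$. By the construction of $\phi$ in Theorem~\ref{structure}, $\phi(I_k) = s_{i_k}(\phi(I_{k-1}))$, and the chain $\varnothing = I_0 \lessdot I_1 \lessdot \cdots \lessdot I_\ell = I$ maps under $\phi$ to a saturated chain in $\Lambda_\lambda$. Since $(\phi(I_{k-1}), \alpha_{i_k}^\vee) \in \{-1, 0, 1\}$ by Proposition~\ref{bourbaki}, and since each step strictly ascends in the partial order on $\Lambda_\lambda$ (which is opposite to the root order), this inner product must equal $+1$, so that $\phi(I_k) = \phi(I_{k-1}) - \alpha_{i_k}$. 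Telescoping yields
\[
\phi(I) = \lambda - \sum_{k=1}^\ell \alpha_{i_k} = \lambda - \sum_{j=1}^t f^j(I)\, \alpha_j.
\]

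I would then pair both sides with $\omega_i$. Unpacking Definition~\ref{fundamental} together with $\alpha_j^\vee = 2\alpha_j/(\alpha_j, \alpha_j)$ gives $(\alpha_j, \omega_i) = \tfrac{1}{2}(\alpha_i, \alpha_i)\, \delta_{ij}$, so that only the $j = i$ term survives on the right-hand side, leaving
\[
(\phi(I), \omega_i) = (\lambda, \omega_i) - \tfrac{1}{2}(\alpha_i, \alpha_i)\, f^i(I).
\]
Solving for $f^i(I)$ produces the stated identity.

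I do not anticipate any substantial obstacle; the only point worth checking is that the multiset of labels $\{i_1, \ldots, i_\ell\}$ in the display above does not depend on the linear extension chosen. This is immediate from the well-definedness of $\phi$ via Propositions~\ref{fullcomm} and \ref{heapiso}, since commuting braid relations permute the labels without altering their multiset (and hence preserve each $f^j(I)$).
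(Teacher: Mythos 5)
Your argument is correct: telescoping $\phi(I)=\lambda-\sum_j f^j(I)\,\alpha_j$ along a linear extension and then pairing with $\omega_i$ (using $(\omega_i,\alpha_j)=\tfrac{1}{2}(\alpha_j,\alpha_j)\delta_{ij}$) gives exactly the stated formula, and the worry about dependence on the linear extension is vacuous since the multiset of labels is intrinsic to $I$. The paper itself does not reprove this lemma --- it imports it from Rush--Wang \cite{Rush2}, Lemma 3.4 --- and your derivation is essentially the same standard argument used there, so there is nothing further to reconcile.
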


\begin{lem} \label{degtog}
Let $V$ be a minuscule representation of $\mathfrak{g}$ with minuscule weight $\lambda$ and minuscule heap $P_{\lambda}$.  Write $P_{\lambda}^i = \lbrace p_{i,1} < p_{i,2} < \cdots < p_{i,k} \rbrace$.  Then the following two equalities hold.  
\begin{align}
& \sum_{j=1}^k \mathcal{T}_{p_{i,j}}(I) =  (\phi(I), \alpha_i^{\vee}). \\
& \sum_{j=1}^k (j-1) \cdot \mathcal{T}_{p_{i,j}}^+(I) - j \cdot \mathcal{T}_{p_{i,j}}^-(I) = f^i(I) (\phi(I), \alpha_i^{\vee})
\end{align}
\end{lem}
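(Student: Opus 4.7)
The plan is to prove identity (1) first via the Rush--Shi equivariance, then deduce identity (2) by localizing the possible toggles to at most two consecutive positions and performing a short case analysis.

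For identity (1), I would compute $\phi(t_i I)$ in two different ways. On one hand, Theorem~\ref{rushshi} gives $\phi(t_i I) = s_i \phi(I) = \phi(I) - (\phi(I), \alpha_i^{\vee}) \alpha_i$. On the other hand, Remark~2.20 asserts that $P_{\lambda}^i$ contains no covering relations of $P_{\lambda}$, so the toggles $t_{p_{i,j}}$ pairwise commute and each $p_{i,j}$'s eligibility in $I$ is unchanged by toggling any other $p_{i,j'}$. Hence $t_i I$ is obtained from $I$ by inserting exactly the $p_{i,j}$ with $\mathcal{T}_{p_{i,j}}^+(I) = 1$ and deleting exactly the $p_{i,j}$ with $\mathcal{T}_{p_{i,j}}^-(I) = 1$. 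Since Theorem~\ref{structure} can be unrolled as $\phi(I') = \lambda - \sum_{p \in I'} \alpha(p)$ (by telescoping along a linear extension and using Proposition~\ref{cover}), each toggled-in element contributes $-\alpha_i$ and each toggled-out element contributes $+\alpha_i$ to the weight. Thus $\phi(t_i I) - \phi(I) = -\alpha_i \sum_j \mathcal{T}_{p_{i,j}}(I)$, and comparing with the previous expression yields identity (1).

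For identity (2), I would first localize eligibility. By Proposition~\ref{totorder}, $P_{\lambda}^i$ is totally ordered, so setting $m := f^i(I)$ we have $p_{i,1}, \ldots, p_{i,m} \in I$ and $p_{i,m+1}, \ldots, p_{i,k} \notin I$. If $j > m+1$, then $p_{i,j-1} \notin I$ lies below $p_{i,j}$, so $p_{i,j}$ cannot be toggled in; if $j < m$, then $p_{i,m} \in I$ lies above $p_{i,j}$, so $p_{i,j}$ cannot be toggled out. Therefore $\mathcal{T}_{p_{i,j}}^+(I) = 0$ unless $j = m+1$, and $\mathcal{T}_{p_{i,j}}^-(I) = 0$ unless $j = m$.

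Now apply identity (1) together with Proposition~\ref{bourbaki}, which forces $(\phi(I), \alpha_i^{\vee}) \in \{-1, 0, 1\}$. If the value is $1$, then $\mathcal{T}_{p_{i,m+1}}^+(I) - \mathcal{T}_{p_{i,m}}^-(I) = 1$, which given $\{0,1\}$-values forces $\mathcal{T}_{p_{i,m+1}}^+(I) = 1$ and $\mathcal{T}_{p_{i,m}}^-(I) = 0$; the left-hand side of (2) then evaluates to $m \cdot 1 = f^i(I)(\phi(I), \alpha_i^{\vee})$. The case of value $-1$ is symmetric. For the value $0$, I must rule out the scenario where both indicators equal $1$: if they did, $t_i I$ would equal $(I \setminus \{p_{i,m}\}) \cup \{p_{i,m+1}\} \neq I$, yet the commutative diagram of Theorem~\ref{rushshi} gives $\phi(t_i I) = s_i \phi(I) = \phi(I)$, hence $t_i I = I$, a contradiction. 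So both indicators vanish and both sides of (2) equal zero.

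The main obstacle is the subtle trichotomy in the last case of Step~3, where one has to exclude simultaneous eligibility of both $p_{i,m}$ and $p_{i,m+1}$; this is resolved by playing the weight-lattice side (via Theorem~\ref{rushshi}) against the order-ideal side. Once this is in hand, the identities follow by a clean case check, with the bulk of the combinatorial content carried by the total ordering of $P_{\lambda}^i$ and the Rush--Shi equivariance.
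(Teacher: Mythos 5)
Your proof is correct and follows essentially the paper's own route: both arguments rest on the trichotomy $(\phi(I), \alpha_i^{\vee}) \in \lbrace -1, 0, 1 \rbrace$ from Proposition~\ref{bourbaki}, the total ordering of $P_{\lambda}^i$ (with no covering relations), and Theorem~\ref{rushshi} to play the action of $s_i$ on $\Lambda_{\lambda}$ against the toggle $t_i$ on $J(P_{\lambda})$. The only difference is organizational: the paper pins down the full indicator profile at once (in the case $(\phi(I), \alpha_i^{\vee}) = 1$, the covering $\phi(I) \lessdot s_i\phi(I)$ transfers to $I \lessdot t_i(I)$, forcing $\mathcal{T}_{p_{i,j}}^+(I) = \delta_{j, f^i(I)+1}$ and $\mathcal{T}_{p_{i,j}}^- \equiv 0$, from which both identities drop out), whereas you derive identity (1) first via the telescoped weight formula $\phi(I) = \lambda - \sum_{p \in I} \alpha(p)$ and then localize eligibility to $p_{i,m}, p_{i,m+1}$ for identity (2) --- a mild reshuffling of the same ingredients, and your exclusion of simultaneous eligibility in the $(\phi(I), \alpha_i^{\vee}) = 0$ case is sound (it is precisely the ``analogous reasoning'' the paper leaves implicit).
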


\begin{proof}
Suppose that $(\phi(I), \alpha_i^{\vee}) = 1$.  Then $s_i(\phi(I))$ covers $\phi(I)$ in $\Lambda_{\lambda}$, so, by Theorem~\ref{rushshi}, we see that $t_i(I)$ covers $I$ in $J(P_{\lambda})$.  Let $j' = f^i(I) + 1$.  Then toggling $I$ at $p_{i,j'}$ yields $I \cup \lbrace p_{i,j'} \rbrace$, and toggling $I$ at any $p_{i, j''} \in P_{\lambda}^i$ for which $j'' \neq j'$ returns $I$.  Hence $\mathcal{T}_{p_{i,j}}^+(I) = \delta_{j, j'}$, where $\delta_{j, j'}$ denotes the Kronecker delta, and $\mathcal{T}_{p_{i,j}}^-(I) = 0$ for all $1 \leq j \leq k$.    

To complete the proof, we recall from Proposition~\ref{bourbaki} that $(\phi(I), \alpha_i^{\vee}) \in \lbrace -1, 0, 1 \rbrace$, and we note that the cases $(\phi(I), \alpha_i^{\vee}) = 0$ and $(\phi(I), \alpha_i^{\vee})$ are handled by analogous reasoning.  
\end{proof}

Our final lemma combines our first two.  

\begin{lem} \label{final}
Let $V$ be a minuscule representation of $\mathfrak{g}$ with minuscule weight $\lambda$ and minuscule heap $P_{\lambda}$.  Write $P_{\lambda}^i = \lbrace p_{i,1} < p_{i,2} < \cdots < p_{i,k} \rbrace$.  Define the function $X_i \colon J(P_{\lambda}) \rightarrow \mathbb{R}$ by \[X_i := \sum_{j=1}^k \mathcal{T}_{p_{i,j}}^- - \left(\sum_{j=1}^k (j-1) \cdot \mathcal{T}_{p_{i,j}} \right) + 2 \frac{(\lambda, \omega_i)}{(\alpha_i, \alpha_i)} \sum_{j=1}^k \mathcal{T}_{p_{i,j}}.\]
Then \[X_i(I) = \frac{2}{(\alpha_i, \alpha_i)} (\phi(I), \omega_i) (\phi(I), \alpha_i^{\vee}).\]
\end{lem}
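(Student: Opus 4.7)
The plan is to massage the three sums defining $X_i$ into something to which the identities of Lemma~\ref{degtog} apply directly, and then to eliminate $f^i(I)$ via Lemma~\ref{whatisordercard}. Everything ultimately rests on the algebraic identity
\[
(j-1)\mathcal{T}_{p_{i,j}}^+ - j \cdot \mathcal{T}_{p_{i,j}}^- = (j-1)\bigl(\mathcal{T}_{p_{i,j}}^+ - \mathcal{T}_{p_{i,j}}^-\bigr) - \mathcal{T}_{p_{i,j}}^- = (j-1)\mathcal{T}_{p_{i,j}} - \mathcal{T}_{p_{i,j}}^-,
\]
which rearranges the two equations in Lemma~\ref{degtog} into exactly the shape in which they appear in $X_i$.

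First, I would sum this identity over $1 \leq j \leq k$ and invoke the second equation of Lemma~\ref{degtog} to conclude
\[
\sum_{j=1}^k \mathcal{T}_{p_{i,j}}^-(I) - \sum_{j=1}^k (j-1) \cdot \mathcal{T}_{p_{i,j}}(I) = -f^i(I) \cdot (\phi(I), \alpha_i^{\vee}).
\]
This handles the first two terms of $X_i(I)$ in one stroke. For the third term, I would apply the first equation of Lemma~\ref{degtog} to replace $\sum_j \mathcal{T}_{p_{i,j}}(I)$ by $(\phi(I), \alpha_i^{\vee})$, yielding an aggregate expression
\[
X_i(I) = (\phi(I), \alpha_i^{\vee}) \cdot \left( 2 \frac{(\lambda, \omega_i)}{(\alpha_i, \alpha_i)} - f^i(I) \right).
\]

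Finally, I would substitute the Rush--Wang formula $f^i(I) = 2 \frac{(\lambda, \omega_i) - (\phi(I), \omega_i)}{(\alpha_i, \alpha_i)}$ from Lemma~\ref{whatisordercard}. The $(\lambda, \omega_i)$ contributions cancel, and what remains is precisely $\frac{2}{(\alpha_i, \alpha_i)} (\phi(I), \omega_i) (\phi(I), \alpha_i^{\vee})$, as claimed.

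The calculation is largely mechanical; there is no real obstacle beyond recognizing at the outset that the coefficient pattern $(j-1)\mathcal{T}^+ - j \mathcal{T}^-$ of the second Lemma~\ref{degtog} identity has been deliberately engineered to agree with the combination $(j-1)\mathcal{T} - \mathcal{T}^-$ appearing in $X_i$. Once this coincidence is spotted, the proof reduces to assembling Lemmas~\ref{whatisordercard} and~\ref{degtog} and simplifying.
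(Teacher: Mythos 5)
Your proposal is correct and matches the paper's own argument: both rewrite the first two sums of $X_i$ so that the second identity of Lemma~\ref{degtog} gives $-f^i(I)(\phi(I),\alpha_i^{\vee})$, handle the last term with the first identity, and then cancel the $(\lambda,\omega_i)$ contributions via Lemma~\ref{whatisordercard}. The computations are the same mechanical simplification, so there is nothing to add.
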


\begin{proof}
\begin{align*}
X_i(I) & = \left(\sum_{j=1}^k  j \cdot \mathcal{T}_{p_{i,j}}^-(I) - (j-1) \cdot \mathcal{T}_{p_{i,j}}^+(I) \right) + 2 \frac{(\lambda, \omega_i)}{(\alpha_i, \alpha_i)} \left(\sum_{j=1}^k \mathcal{T}_{p_{i,j}}(I)\right) \\ & = -f^i(I) (\phi(I), \alpha_i^{\vee}) + 2 \frac{(\lambda, \omega_i)}{(\alpha_i, \alpha_i)} (\phi(I), \alpha_i^{\vee}) \\ & = \left(2 \frac{(\phi(I), \omega_i) - (\lambda, \omega_i)}{(\alpha_i, \alpha_i)} + 2 \frac{(\lambda, \omega_i)}{(\alpha_i, \alpha_i)}\right) (\phi(I), \alpha_i^{\vee}) \\ & = \frac{2}{(\alpha_i, \alpha_i)} (\phi(I), \omega_i) (\phi(I), \alpha_i^{\vee}).
\end{align*}
\end{proof}

We proceed to the proof of Theorem~\ref{main}.  Note that \[\operatorname{ddeg} = \sum_{p \in P_{\lambda}} \mathcal{T}_p^- = \sum_{i = 1}^t \sum_{p \in P_{\lambda}^i} \mathcal{T}_p^-.\]  

Recall that if $\mu$ is a toggle-symmetric distribution on $J(P_{\lambda})$, then $\mathbb{E}(\mu; \mathcal{T}_p)$ vanishes for all $p \in P_{\lambda}$.  Thus, for all toggle-symmetric distributions $\mu$ on $J(P_{\lambda})$, we see that $\mathbb{E}(\mu; \operatorname{ddeg}) = \mathbb{E}(\mu; \sum_{i=1}^t X_i)$.  

Now suppose $\mathfrak{g}$ is simply laced, and let $\Omega$ denote the common length of the roots of $\mathfrak{g}$.  Then, for all $I \in J(P_{\lambda})$, 
\begin{align*}
\sum_{i=1}^t X_i(I) & = \sum_{i=1}^t \frac{2}{(\alpha_i, \alpha_i)} (\phi(I), \omega_i) (\phi(I), \alpha_i^{\vee}) \\ & = \frac{2}{\Omega^2} \left(\phi(I), \sum_{i=1}^t (\phi(I), \omega_i) \alpha_i^{\vee} \right) \\ & = \frac{2}{\Omega^2} (\phi(I), \phi(I)) = 2\frac{(\lambda, \lambda)}{\Omega^2},
\end{align*}
where the last equality holds because $W$ is orthogonal and acts transitively on $\Lambda_{\lambda}$.  This completes the proof. \qed

\section{Acknowledgments}

The author's interest in what has become known as the CDE property was prompted by a presentation of Melody Chan at the American Institute of Mathematics workshop on Dynamical Algebraic Combinatorics.  The author extends his gratitude to James Propp, Thomas Roby, Jessica Striker, and Nathan Williams for organizing the workshop and inviting the author to speak about his prior work on minuscule posets.  He also thanks Sam Hopkins, Victor Reiner, and XiaoLin Shi for helpful conversations.  

During the month in which this article was written, the author was a quantitative research intern at the Susquehanna International Group.  He is grateful for SIG's vibrant intellectual environment, which was of considerable benefit to his research.  

The author is presently supported by the US National Science Foundation Graduate Research Fellowship Program.

\end{document}